\newtheorem{theorem}{Theorem}
\newtheorem{lemma}[theorem]{Lemma}
\newtheorem{remark}[theorem]{Remark}
\newtheorem{example}{Example}
\newcommand{\RR}{\mathbb{R}}
\newcommand{\Q}{\mathcal{Q}}
\newcommand{\tQ}{\mathcal{\tilde Q}}
\newcommand{\R}{\mathcal{R}}
\newcommand{\tR}{\mathcal{\tilde R}}
\renewcommand{\P}{\mathcal{P}}
\renewcommand{\O}{\mathcal{O}}
\newcommand{\F}{\mathcal{F}}
\renewcommand{\vec}{\bf}
\renewcommand{\div}{\operatorname{div}}
\newcommand{\trih}{\mathcal{T}_h}
\newcommand{\triH}{\mathcal{T}_H}
\newcommand{\IH}{\mathcal{I}_H}
\newcommand{\Cint}{{\mathcal{I}}}
\newcommand{\average}[1]{\ensuremath{\{\hspace{-0.75ex}\{#1\}\hspace{-0.75ex}\}} }
\newcommand{\jump}[1]{\ensuremath{[\![#1]\!]} }
\newcommand{\Vf}{V^{\rm f}}
\newcommand{\tVmsk}{{\tilde V}^{\rm ms}_{k}}
\newcommand{\Vmsk}{V_k^{\rm ms}}
\newcommand{\Vms}{V^{\rm ms}}
\newcommand{\TOL}{{\rm TOL}}
\newcommand{\vf}{v^{\rm f}}
\newcommand{\uf}{u^{\rm f}}
\newcommand{\ums}{u^{\rm ms}}
\newcommand{\umsk}{{u}_k^{\rm ms}}
\newcommand{\uk}{u_k}
\newcommand{\tumsk}{{\tilde u}^{\rm ms}_k}
\newcommand{\tuk}{{\tilde u}_k}
\newcommand{\humsk}{{\hat u}^{{\rm ms}}_k}
\newcommand{\huk}{{\hat u}_k}
\newcommand{\wmsk}{{w}_k^{\rm ms}}
\newcommand{\twmsk}{{\tilde w}^{\rm ms}_k}
\title{{\bf Numerical homogenization of elliptic PDEs with similar coefficients}}
\author{Fredrik Hellman%
  \thanks{Department of Information Technology, Uppsala University, Box 337, SE-751 05 Uppsala, Sweden. Supported by Centre for Interdisciplinary Mathematics (CIM), Uppsala University.}
  \and
  Axel M\r{a}lqvist%
  \thanks{Department of Mathematical Sciences,
    Chalmers University of Technology and
    University of Gothenburg
    SE-412 96 G\"oteborg, Sweden. Supported by the Swedish Research Council.}
}
\date{}
\begin{document}

\maketitle

\begin{abstract}
  We consider a sequence of elliptic partial differential equations
  (PDEs) with different but similar rapidly varying coefficients. Such
  sequences appear, for example, in splitting schemes for
  time-dependent problems (with one coefficient per time step) and in
  sample based stochastic integration of outputs from an elliptic PDE
  (with one coefficient per sample member). We propose a
  parallelizable algorithm based on Petrov--Galerkin localized
  orthogonal decomposition (PG-LOD) that adaptively (using computable
  and theoretically derived error indicators) recomputes the local
  corrector problems only where it improves accuracy. The method is
  illustrated in detail by an example of a time-dependent two-pase
  Darcy flow problem in three dimensions.
\end{abstract}

\section{Introduction}
We consider a sequence of elliptic partial differential equations
(PDEs) with different, but in some sense similar, rapidly varying
coefficients. In some applications, the difference between consecutive
coefficients in the sequence is localized, for example for certain
Darcy flow applications and in the simulation of random defects in
composite materials. This paper studies an opportunity to exploit that
the differences are localized to save computational work in the
context of the localized orthogonal decomposition method (LOD,
\cite{MaPe14}).


The accuracy of Galerkin projection onto standard finite element
spaces generally suffers from variations in the coefficient that are
not resolved by the finite element mesh. The work \cite{BaOs83}
studies an elliptic equation in 1D with a rapidly varying coefficient
and notes that coefficient variations within the element lead to
inaccurate solutions for the standard finite element method. Replacing
the coefficient with its elementwise harmonic average leads to an
accurate method. This result, however, does not easily generalize to
higher dimensions. For periodic and semi-periodic coefficients varying
on an asymptotically fine scale, a homogenized coefficient can be
computed and used for coarse scale computations also in higher
dimensions \cite{BeLiPa78}. The early multiscale method \cite{HoWu97}
is based on homogenization theory and works under assumptions on scale
separation and periodicity. Many recent contributions within the field
of numerical homogenization can be used without assumptions on
periodicity and in higher dimensions, see e.g.\ \cite{Be16,
  HuFeGoMaQu98, KoPeYs16, MaPe14, OwZhBe14}. In this work, we consider
the LOD technique \cite{MaPe14} in the Petrov--Galerkin formulation
(PG-LOD) studied in detail in \cite{ElGiHe14}.

The fundamental idea of the LOD method is that a low-dimensional
function space (multiscale space) with good approximation properties
is constructed by computing localized fine-scale correctors to the
basis functions of a standard low-dimensional coarse finite element
space based on a coarse mesh. Each localized corrector problem is
posed only within a patch of a certain radius around its coarse basis
function and thus depends only on the diffusion coefficient in that
patch. The PG-LOD method has several good properties from a
computational perspective. The main advantage is that the PG-LOD
corrector problems can be computed completely in parallel with the
only communication being a final reduction to form a low-dimensional
global stiffness matrix. Further, the fine-scale coefficient only
needs to be accessible and stored in memory for one localized
corrector problem at a time. Additionally, the method is robust in the
sense that both the localized corrector problems and the global
low-dimensional problems are typically small enough to be solved with
a direct solver.

Once computed, the correctors can be reused for problems with the same
or similar diffusion coefficient.  We study the case when the
diffusion coefficient varies in a sequence of problems. In such
situations, there is an opportunity to reuse previously computed
localized correctors if the coefficients do not vary too much between
consecutive problems. Since the computational cost is proportional to
the number of localized corrector problems that have to be recomputed,
it is most advantageous if the perturbations of the coefficient are
localized. Two practical examples are two-phase flow where the
coefficient depends on the saturation of the two fluids, or when the
coefficient is a deviation from a base coefficient as in the case with
defects in composite materials.

In this work we derive computable error indicators for the error
introduced by refraining from recomputing a corrector after a
perturbation in the coefficient. The method we propose computes all
localized correctors and global stiffness matrix contributions for the
first coefficient in the sequence of elliptic PDEs. For the subsequent
coefficients, we use the error indicators to adaptively recompute only
the correctors that need to be recomputed in order to get a
sufficiently accurate solution.
The coefficients that have not
been recomputed we call lagging coefficients. The method is completely
parallelizable over the elements of the coarse mesh. A particularly
interesting setting is when only quantities on the coarse mesh are
required from the solution, for example upscaled Darcy fluxes in a
Darcy flow problem, or the coarse interpolation of the full
solution. Any computed fine scale quantities can then be forgotten
between the iterations in the sequence and the memory requirement
becomes very low.

The paper is divided into five sections: Problem formulation in
Section~\ref{sec:problem}, method description in
Section~\ref{sec:method}, error analysis in Section~\ref{sec:error},
implementation in Section~\ref{sec:implementation}, and numerical
experiments in Section~\ref{sec:numerical}. Both the method
description and the error analysis are divided into four steps, with
increasing level of approximation in each step: (i) reformulation by
variational multiscale method (VMS), (ii) localization by LOD, (iii)
approximation of localized correctors by lagging coefficient, and (iv)
approximation of global stiffness matrix contribution by lagging
coefficient.  The main results are the method
\eqref{eq:lod_falsespacecoef} in Section~\ref{sec:method4}, the error
bound in Theorem~\ref{thm:main} and Algorithm~\ref{alg:full}.

\section{Problem formulation}
\label{sec:problem}
Let $\Omega$ be a polygonal domain in $\RR^d$ (with $d=1, 2$ or $3$)
with the boundary partitioned into disjoint subsets $\Gamma_D$ (for
Dirichlet boundary conditions) and $\Gamma_N$ (for Neumann boundary
conditions). Suppose we have a sequence of elliptic equations: for
$n = 1, 2, \ldots$, solve for $\bar u^n$, such that
\begin{equation}
  \label{eq:mainproblem}
  \begin{aligned}
    -\div A^n \nabla \bar u^n & = f \qquad \text{in } \Omega, \\
    \bar u^n & = g \qquad \text{on } \Gamma_D, \\
    {\bf n} \cdot A^n \nabla \bar u^n & = 0 \qquad \text{on } \Gamma_N, \\
  \end{aligned}
\end{equation}
where $f \in L^2(\Omega)$, $g \in H^{1/2}(\Gamma_D)$, ${\bf n}$ is the
outward normal of the boundary, and $A^n \in L^{\infty}(\Omega)$ is a
coefficient varying significantly over small distances. To keep the
presentation short, we limit ourselves to the case where $f$ and $g$
are independent of $n$, however, the analysis in this paper can be
generalized to $n$-dependent $f$ and $g$. We will refer to the
sequence index or rank as \emph{time step} throughout the paper,
although it does not need to correspond to a step from a
time-disceratization. For instance, in
Section~\ref{sec:implementation} we briefly discuss an application for
simulation of weakly random defects in composite materials, where the
sequence index corresponds to a Monte Carlo sample member index.

In the remainder of this section and
Sections~\ref{sec:method}--\ref{sec:error}, we consider a fixed 
step $n$ and drop this index for all quantities. We call the
coefficient $A = A^n$ at the current time step the \emph{true
  coefficient}. Ideally, only the true coefficient $A$ would be used
in the solution at the current time step. However, in order to lower
the computational cost, computations from previous time steps will be
reused. This means coefficients from previous time steps
(\emph{lagging coefficients}) will enter the analysis through the
definition of the localized correctors and in the assembly of the
global stiffness matrix. These lagging coefficients will be denoted by
$\tilde A$. We also want to emphasize that the error indicators
derived here are applicable also to situations where the coefficient
deviates from a base coefficient, for example within the application
of simulations of weakly random defects in composite materials.

We will work with a weak formulation of the above problem. Let
$V = \{ v \in H^1(\Omega) \,:\, v|_{\Gamma_D} = 0\}$. In case
$\Gamma_D$ is empty, we instead consider only solutions and test
functions in the quotient space $V = H^1(\Omega) / \RR$.
Let $(\cdot, \cdot)$ denote the $L^2$-scalar product over $\Omega$,
and $(u, v)_\omega = \int_\omega uv$.  Further, we define
$\| v \|^2_{L^2(\omega)} = \int_\omega v^2$,
$\| v \|_{L^2} = \| v \|_{L^2(\Omega)}$, and the
bilinear form $a(u, v) = (A \nabla u, \nabla v)$.
We let $\bar u = u + g$, where $u \in V$ and $g \in H^1(\Omega)$ is an
extension of the boundary condition $g$ to the full domain and seek to
find $u \in V$, such that for all $v \in V$,
\begin{equation}
\label{eq:continuous}
\begin{aligned}
  (A \nabla u, \nabla v) = (f, v) - (A\nabla g, \nabla v).
\end{aligned}
\end{equation}
Assuming there exist constants $0 < \alpha$ and $\beta < \infty$, so
that $\alpha \le A \le \beta$ a.e., $(A \nabla \cdot, \nabla \cdot)$
is bounded and coercive on $V$ and existence of a unique solution is
guaranteed by the Lax--Milgram theorem. We further define the energy
norm $|\cdot|_A = (A \nabla \cdot, \nabla \cdot)^{1/2}$ on $V$, and
the semi-norm
$|\cdot|_{A,\omega} = (A \nabla \cdot, \nabla \cdot)_\omega^{1/2}$.

\section{Method description}
\label{sec:method}
In this section, we describe the proposed numerical method in a series
of steps, each of which introduces another level of approximation for
the problem \eqref{eq:continuous} above.

\subsection{Variational multiscale method}
The first step is to reformulate the problem using the variational
multiscale method \cite{HuFeGoMaQu98, HuSa07}. This formulation forms
the basis for the LOD approximation and makes it possible to reduce
the dimensionality of the problem once the corrector problems have
been solved.

Let $\triH$ be a regular and quasi-uniform family of conforming
subdivisions of $\Omega$ into elements of maximum diameter $H$, and
$V_H \subset V$ be a family of conforming first order finite element
spaces on this mesh, e.g.\ $\P_1$ or $\Q_1$ depending on the shape of the
elements. The choice of a linear projective quasi-interpolation
operator $\IH : V \to V_H$ defines the fine space as its kernel
$\Vf = \ker \IH = \{ v \in V \,:\, \IH v = 0 \}$. We assume there
exists a constant $C$ independent of $H$ so that for all
$v \in V$ and $T \in \triH$, it holds
\begin{equation}
  \label{eq:interpolation}
  H^{-1}\| v - \IH v\|_{L^2(T)} + \| \nabla (v - \IH v) \|_{L^2(T)} \le C_{\Cint} \| \nabla v \|_{L^2(U(T))}.
\end{equation}
Here $U(T)$ is the union of all neighboring elements to $T$, i.e.
\begin{equation*}
  U(T) = \bigcup \{T' \in \triH \,:\, \overline T \cap \overline {T'} \ne \emptyset \}.
\end{equation*}
Since we assume $\IH$ is projective (this is not strictly necessary,
see e.g.\ \cite{ElGiHe14}), we have the decomposition
$V = V_H \oplus \Vf$ and can decompose the solution $u = u_H + \uf$
and test function $v = v_H + \vf$ and test \eqref{eq:continuous} with
the two spaces separately:
\begin{subequations}
\begin{align}
  (A \nabla (u_H + \uf) , \nabla v_H) &= (f, v_H) - (A \nabla g, \nabla v_H), \label{eq:vms_coarse} \\
  (A \nabla \uf, \nabla \vf) &= (f, \vf) - (A \nabla g, \nabla \vf) - (A \nabla u_H, \nabla \vf). \label{eq:vms_fine}
\end{align}
\end{subequations}
We note that $\uf$ is linear in $f$ and $u_H$, and we define the
linear correction operators $\Q : H^1(\Omega) \to \Vf $ and $\R : L^2(\Omega) \to \Vf$, so
that $\uf = -\Q u_H + \R f - \Q g$, i.e., find $\Q v \in \Vf$ and
$\R f \in \Vf$, such that for all $\vf \in \Vf$,
\begin{equation}
\begin{aligned}
  \label{eq:vms_corrections}
  (A \nabla \Q v , \nabla \vf) &= (A \nabla v, \nabla \vf), \\
  (A \nabla \R f , \nabla \vf) &= (f, \vf). \\
\end{aligned}
\end{equation}
These equations have unique solutions, since
$(A\nabla \cdot, \nabla \cdot)$ is still bounded and coercive on a
subspace $\Vf \subset V$.

We introduce a new space, the multiscale space,
$\Vms = V_H - \Q V_H = \{v_H - \Q v_H\,:\,v_H \in V_H\}$, and note
that we have the orthogonality relation $\Vms \perp_a \Vf$.  The
solutions $\Q u_H$, $\R f$, and $\Q g$ can be plugged into
\eqref{eq:vms_coarse} and we get the following low-dimensional
Petrov--Galerkin problem, find $\ums \in \Vms$, such that for all
$v_H \in V_H$,
\begin{equation}
  \begin{aligned}
    \label{eq:vms_low}
    (A \nabla \ums, \nabla v_H) = (f, v_H) - (A \nabla g , \nabla v_H) - (A \nabla \R f , \nabla v_H) + (A \nabla \Q g , \nabla v_H).
  \end{aligned}
\end{equation}
The full solution is then $u = \ums + \R f - \Q g$.
\begin{remark}[Right hand side correction $\R f$]
  \label{rem:rhsc}
  It is possible to obtain an approximate solution even if neglecting
  the right hand side correction term, i.e.\ letting $\R = 0$
  above. See for example \cite{HeMa14, MaPe14}.
\end{remark}


\subsection{Localized orthogonal decomposition}
The second step is to localize the corrector computations by means of
localized orthogonal decomposition (LOD). The basic idea is to solve
the corrector problems \eqref{eq:vms_corrections} only on localized
patches instead of on the full domain to reduce the computational
cost.

For the localization, we define element patches for $T \in \triH$,
$U_k(T) \subset \Omega$, where $0 \le k \in \mathbb{N}$. With trivial
case $U_0(T) = T$, $U_k(T)$ (a $k$-layer element patch around
$T$) is defined by the recursive relation
\begin{equation*}
  U_{k+1}(T) = \bigcup \{T' \in \triH : \overline{U_{k}(T)} \cap \overline {T'} \ne \emptyset \}.
\end{equation*}
See Figure~\ref{fig:patch} for an illustration of element patches.
\begin{figure}[]
  \centering
  \begin{subfigure}{.45\textwidth}
    \centering
    \includegraphics[width=4cm]{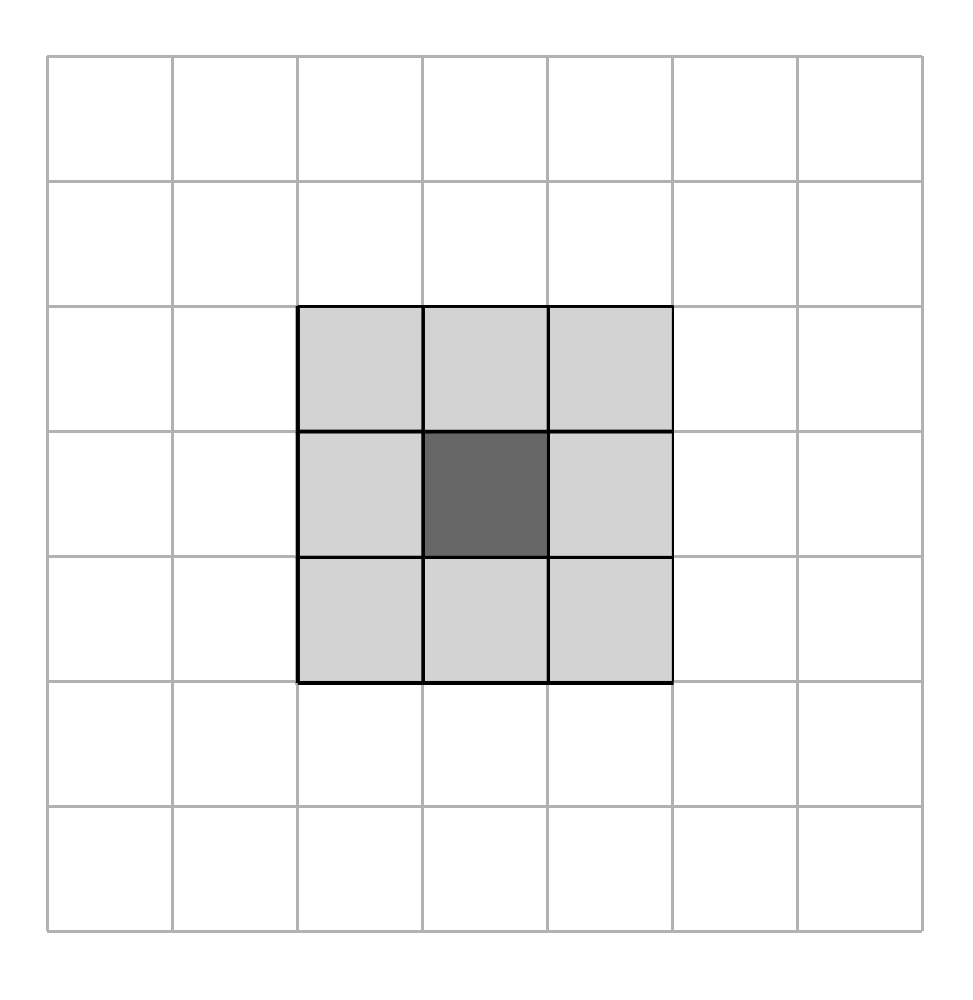}
    \caption{One-layer element patch, $k = 1$.}
  \end{subfigure}
  \hspace{1em}
  \begin{subfigure}{.45\textwidth}
    \centering
    \includegraphics[width=4cm]{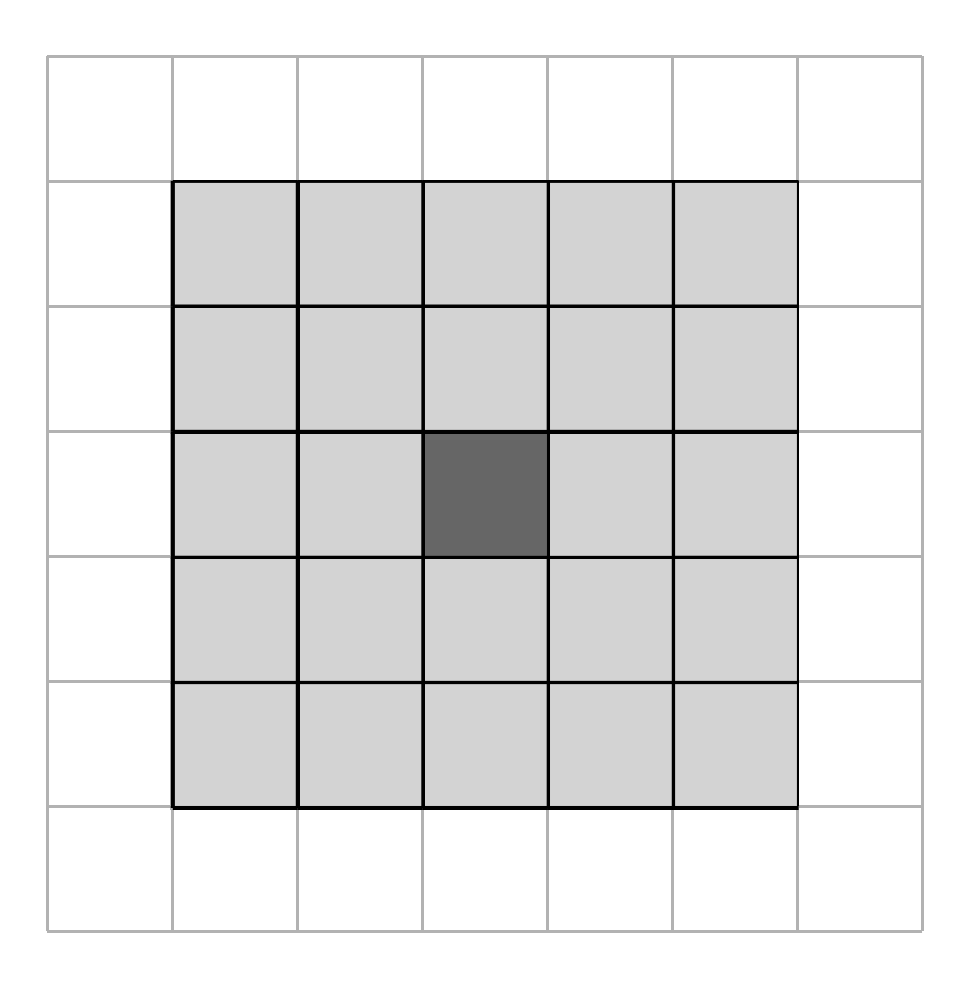}
    \caption{Two-layer element patch, $k = 2$.}
  \end{subfigure}
  \caption{Illustration of $k$-layer element patches for quadrilateral
    elements. Dark gray is $T$.  Light gray is $U_k(T)$.}
  \label{fig:patch}
\end{figure}
We further define localized fine spaces
\begin{equation*}
  \Vf(U_k(T)) = \{ v \in \Vf \,:\, v|_{\Omega \setminus U_k(T)} = 0\},
\end{equation*}
consisting of fine functions which are zero outside element
patches. Throughout the paper, localized quantities are subscripted
with the patch size $k$.

Instead of solving \eqref{eq:vms_corrections}, we compute the
operators $\Q_k = \sum_{T \in \triH} \Q_{k,T}$ and
$\R_k = \sum_{T \in \triH} \R_{k,T}$, with $\Q_{k,T}$ and $\R_{k,T}$ defined by
\begin{equation}
\begin{aligned}
  \label{eq:lod_corrections}
  (A \nabla \Q_{k,T} v, \nabla \vf) &= (A \nabla v, \nabla \vf)_T, \\
  (A \nabla \R_{k,T} f, \nabla \vf) &= (f, \vf)_T, \\
\end{aligned}
\end{equation}
for all $\vf \in \Vf(U_k(T))$ and all $T \in \triH$.

We define the localized multiscale space $\Vmsk = V_H - \Q_k V_H$. Our localized
multiscale problem reads find $\umsk \in \Vmsk$, such that for all
$v_H \in V_H$,
\begin{equation*}
\begin{aligned}
  (A \nabla \umsk, \nabla v_H) & = (f, v_H) - (A \nabla g, \nabla v_H) - {} \\
  & \phantom{{}={}} (A \nabla \R_k f, \nabla v_H) + (A \nabla \Q_k g, \nabla v_H),
\end{aligned}
\end{equation*}
and the full solution for the second approximation is $u_k = \umsk + \R_k f - \Q_k g$.

\subsection{Lagging multiscale space}
In the third approximation we compute the localized element correctors
using a lagging coefficient ${\tilde A}_T$ rather than the true $A$. This
makes it possible to reuse correctors that have been computed at
earlier time steps, so that localized correctors only for a small
number of elements $T$ need to be recomputed.

We define the lagging localized corrector operators
${\tilde \Q}_{k} = \sum_T {\tilde \Q}_{k,T}$ and
${\tilde \R}_{k} = \sum_T {\tilde \R}_{k,T}$. The element corrector
operators ${\tilde \Q}_{k, T} v, {\tilde \R}_{k, T} f \in \Vf(U_k(T))$
are defined such that for all $\vf \in \Vf(U_k(T))$,
\begin{equation}
  \begin{aligned}
    \label{eq:lagged_corrections}
    ({\tilde A}_T \nabla {\tQ}_{k, T} v, \nabla \vf) &= ({\tilde A}_T \nabla v,  \nabla \vf)_T, \\
    ({\tilde A}_T \nabla {\tR}_{k, T} f, \nabla \vf) &= (f, \vf)_T.
  \end{aligned}
\end{equation}
Note that lagging coefficients ${\tilde A}_T$ are not necessarily the same
for all $T$.
\begin{example}[Relation between lagging coefficient and time steps]
  As an example, for the current time step $A = A^n$, for element $T'$
  the coefficient can be one time step old, i.e.\
  ${\tilde A}_{T'} = A^{n-1}$ and for $T''$ three time steps old,
  i.e.\ ${\tilde A}_{T''} = A^{n-3}$. That is, different lagging
  localized element correctors may be defined in terms of coefficients
  from different time steps in history.
\end{example}

In analogy with previous multiscale spaces, we define a lagging
multiscale space $\tVmsk = V_H - \tQ_k V_H$ and the problem is then to
find $\tumsk \in \tVmsk$, such that for all $v_H \in V_H$,
\begin{equation}
  \label{eq:lod_falsespace}
  (A \nabla \tumsk, \nabla v_H) = (f, v_H) - (A \nabla g, \nabla v_H) - (A \nabla \tR_k f, \nabla v_H) + (A \nabla \tQ_k g, \nabla v_H)
\end{equation}
and the full solution for the third approximation is ${\tilde u}_k = \tumsk + \tR_k f - \tQ_k g$.


\subsection{Lagging global stiffness matrix contribution}
\label{sec:method4}
The fourth approximation involves not only using a lagging multiscale
space, but also a lagging coefficient in the assembly of the global
stiffness matrix and right hand side. The rationale behind this is
that computing the integrals in the stiffness matrix and right hand
side for \eqref{eq:lod_falsespace} requires that all precomputed
element correctors are stored. To circumvent this, we propose the
following approximation. First we define a lagging bilinear form
$\tilde a$ (and its elementwise contributor $\tilde a_T$), based on
the same lagging coefficients ${\tilde A}_T$ as was used for the
multiscale space in the previous section,
\begin{equation}
\label{eq:final_a}
{\tilde a}(u, v) := \sum_{T \in \triH} {\tilde a}_T(u, v) := \sum_{T \in \triH} ({\tilde A}_T (\chi_{T}\nabla - \nabla \tQ_{k,T})\IH u, \nabla v)
\end{equation}
where $\chi_{T}$ is the indicator function for subset
$T \subset \Omega$. We also define a lagging linear functional
$\tilde L$ (and its elementwise contributor $\tilde L_T$),
\begin{equation}
  \label{eq:final_L}
  \begin{aligned}
  \tilde L(v) & := \sum_{T\in\triH} \tilde L_T(v) \\
  & := \sum_{T \in \triH} (f, v)_T - (A\nabla g, \nabla v)_T - ({\tilde A}_T \nabla \tR_{k,T} f, \nabla v) + ({\tilde A}_T \nabla \tQ_{k,T} g, \nabla v).
  \end{aligned}
\end{equation}
Then the problem is posed as to find $\humsk \in \tVmsk$, such
that for all $v_H \in V_H$,
\begin{equation}
  \label{eq:lod_falsespacecoef}
  {\tilde a}(\humsk, v_H) = \tilde L(v_H).
\end{equation}
The full solution for the final approximation step is then
$\huk = \humsk + \tR_k f - \tQ_k g$.

We note that \eqref{eq:lod_falsespacecoef} coincides with
\eqref{eq:lod_falsespace} when ${\tilde A}_T = A$ for all $T$. Also,
we note that the coefficients for the linear system can be computed
immediately after ${\tQ}_{k,T}$ and ${\tR}_{k,T}$ have been
computed. This means no correctors need to exist simultaneously.

This method is independent of the true coefficient $A$, if not
${\tilde A}_T = A$ for any $T$. In order to construct a numerical
method with control of the error from this approximation, we use error
indicators on the element correctors to determine whether they need to
be recomputed or not. Next, we define three computable error indicators,
$e_u$, $e_f$ and $e_g$, for the error introduced by using a lagging
coefficient.

\subsection{Error indicators}
\label{sec:errorindicators}
As can be seen in later
Sections~\ref{sec:error_laggingmultiscalespace} and
\ref{sec:lagging_global}, the differences
$|\Q_{k,T}v-\tQ_{k,T}v|_A$ for $v \in V_H$,
$|\R_{k,T}f-\tR_{k,T}f|_A$, and $|\Q_{k,T}g-\tQ_{k,T}g|_A$ constitute
the sources to the error in the approximation from using lagging
coefficients. In this section, we define three elementwise error
indicators ($e_{u,T}$, $e_{f,T}$, and $e_{g,T}$) and relate them to
the above differences in Lemma~\ref{lem:errorind}.

\begin{lemma}[Error indicators: definitions and bounds] The following bounds hold,
  \label{lem:errorind}
  \begin{equation*}
    \begin{aligned}
      |\Q_{k,T}v-\tQ_{k,T}v|_A & \le e_{u,T} |v|_{A,T}, &\text{for all } v \in V_H,\\
      |\R_{k,T}f-\tR_{k,T}f|_A & \le e_{f,T} \|f\|_{L^2(T)}, \\
      |\Q_{k,T}g-\tQ_{k,T}g|_A & \le e_{g,T} |g|_{A,T}, \\
    \end{aligned}
  \end{equation*}
  where
  \begin{equation*}
    \begin{aligned}
      e_{u,T} & = \max_{w|_T\,:\,w \in V_H, |w|_{A,T} = 1} \|({\tilde A}_T - A) A^{-1/2} (\chi_{T} \nabla w - \nabla \tQ_{k,T}w)\|_{L^2(U_k(T))}, \\
      e_{f,T} & = \frac{\|({\tilde A}_T - A) A^{-1/2} \nabla \tR_{k,T}f\|_{L^2(U_k(T))}}{\|f\|_{L^2(T)}}\qquad \text{or 0 if } \|f\|_{L^2(T)} = 0,\\
      e_{g,T} & = \frac{\|({\tilde A}_T - A) A^{-1/2} (\chi_{T} \nabla g - \nabla \tQ_{k,T}g)\|_{L^2(U_k(T))}}{|g|_{A,T}} \qquad \text{or 0 if } |g|_{A,T} = 0.
    \end{aligned}
  \end{equation*}
  We additionally define
  \begin{equation*}
    \begin{aligned}
      e_u  = \max_{T \in \triH} e_{u,T},\quad
      e_f  = \max_{T \in \triH} e_{f,T},\quad \text{ and }
      e_g  = \max_{T \in \triH} e_{g,T}.
    \end{aligned}
  \end{equation*}
\end{lemma}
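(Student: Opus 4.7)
The plan is to prove all three inequalities with the same Strang-type argument: form the difference between true and lagging correctors, exploit the defining variational equations to rewrite the energy norm of the difference as a bilinear expression involving $A - \tilde A_T$, and close with Cauchy--Schwarz after inserting $A^{1/2}A^{-1/2}$.

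For the first bound, I would set $\phi = \Q_{k,T}v - \tQ_{k,T}v$, which lies in $\Vf(U_k(T))$ so that both defining equations can be tested against $\vf = \phi$. Using \eqref{eq:lod_corrections} gives $(A\nabla\Q_{k,T}v,\nabla\phi) = (A\nabla v,\nabla\phi)_T = (A\chi_T\nabla v,\nabla\phi)$, hence $|\phi|_A^2 = (A(\chi_T\nabla v - \nabla\tQ_{k,T}v),\nabla\phi)$. The key step is that \eqref{eq:lagged_corrections} yields the Galerkin-type identity $(\tilde A_T(\chi_T\nabla v - \nabla\tQ_{k,T}v),\nabla\phi)=0$, which I subtract to obtain $|\phi|_A^2 = ((A - \tilde A_T)(\chi_T\nabla v - \nabla\tQ_{k,T}v),\nabla\phi)$. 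Since $\phi$ is supported in $U_k(T)$, the integral can be restricted there; writing $A = A^{1/2}\cdot A^{1/2}$ and applying Cauchy--Schwarz then yields
\[
  |\phi|_A \le \|(\tilde A_T - A)A^{-1/2}(\chi_T\nabla v - \nabla\tQ_{k,T}v)\|_{L^2(U_k(T))}.
\]
Dividing by $|v|_{A,T}$ (handling the degenerate case $|v|_{A,T}=0$ separately, in which case $\nabla v = 0$ on $T$ forces both correctors to vanish) and taking the supremum over $v\in V_H$ recovers $e_{u,T}$.

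The second bound is obtained by the same template with $\psi = \R_{k,T}f - \tR_{k,T}f$. The right-hand side of the corrector equation is now $(f,\psi)_T$ rather than $(A\chi_T\nabla v,\nabla\psi)$, which is independent of the coefficient, so the corresponding orthogonality yields directly $|\psi|_A^2 = ((\tilde A_T - A)\nabla\tR_{k,T}f,\nabla\psi)$; Cauchy--Schwarz and division by $\|f\|_{L^2(T)}$ (with the degenerate case handled by the convention in the lemma) give the claim. The third bound is proved exactly as the first, replacing the $V_H$-function $v$ by $g\in H^1(\Omega)$; no property of $V_H$ is actually used in the argument.

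There is no real obstacle: the method of proof is the standard Céa/Strang lemma for the perturbed bilinear form on the localized fine space $\Vf(U_k(T))$, and coercivity is guaranteed by the uniform bounds on $A$ and $\tilde A_T$. The only point that needs a touch of care is the symmetric status of $\tilde A_T$ and $A$: the $a$-orthogonality of the true corrector equation is used once (to rewrite $(A\nabla\Q_{k,T}v,\nabla\phi)$ in terms of data) and the lagged Galerkin orthogonality is used once (to pass from $A$ to $A - \tilde A_T$), and it is the combination of the two that produces the clean coefficient-difference expression $(A-\tilde A_T)(\chi_T\nabla v - \nabla\tQ_{k,T}v)$ rather than something depending on the unknown true corrector $\Q_{k,T}v$. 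That symmetry is what makes $e_{u,T}$, $e_{f,T}$, $e_{g,T}$ actually computable from lagged quantities alone.
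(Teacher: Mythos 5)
Your proof is correct and follows essentially the same route as the paper's: test both corrector equations with the difference $z=\Q_{k,T}v-\tQ_{k,T}v\in\Vf(U_k(T))$, combine the two Galerkin identities to express $|z|_A^2$ as $\bigl((A-\tilde A_T)(\chi_T\nabla v-\nabla\tQ_{k,T}v),\nabla z\bigr)_{U_k(T)}$, and close with the weighted Cauchy--Schwarz inequality, the $f$- and $g$-cases being identical in structure. Your explicit handling of the degenerate case $|v|_{A,T}=0$ is a harmless addition; the only point the paper addresses that you omit is verifying that the maximum defining $e_{u,T}$ is finite and attained (a boundedness/compactness argument on the normalized finite-dimensional set), which is minor.
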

\begin{proof}
  For any $v \in V_H$, let $z = \Q_{k,T}v - \tQ_{k,T}v$, then using
  \eqref{eq:lod_corrections} and \eqref{eq:lagged_corrections}, we get
  \begin{equation*}
    \begin{aligned}
      |z|^2_{A,U_k(T)} & = (A \nabla (\Q_{k,T}v - \tQ_{k,T}v), \nabla z)_{U_k(T)} \\
      & = (({\tilde A}_T - A) \nabla \tQ_{k,T}v, \nabla z)_{U_k(T)} - (({\tilde A}_T - A)\nabla v, \nabla z)_T \\
      & \le \|({\tilde A}_T - A) A^{-1/2}(\chi_T \nabla v - \nabla \tQ_{k,T}v)\|_{L^2(U_k(T))} \cdot |z|_{A,U_k(T)}. \\
    \end{aligned}
  \end{equation*}
  Then, clearly $e_{u,T}$ (if it exists) constitute the asserted
  bound.  The following inequality gives a bound for the norm
  being maximized in the definition of $e_{u,T}$ (assuming that $|w|_{A,T} = 1$),
  \begin{equation*}
    \begin{aligned}
      &\|({\tilde A}_T - A) A^{-1/2} (\chi_{T} \nabla w - \nabla\tQ_{k,T} w)\|_{L^2(U_k(T))} \\
      &\qquad \le \|({\tilde A}_T - A) A^{-1}\|_{L^\infty(T)} + {}\\
      &\qquad \phantom{{}\le{}} \|({\tilde A}_T - A) A^{-1/2} {\tilde A}_T^{-1/2}\|_{L^\infty(U_k(T))}\|A^{-1/2}{\tilde A}_T^{1/2}\|_{L^\infty(T)}.
    \end{aligned}
  \end{equation*}
  The maximum is thus attained and exists by the extreme value theorem.

  Similarly, for $z = \R_{k,T} f- \tR_{k,T} f$, we have
  \begin{equation*}
    \begin{aligned}
      |z|^2_{A,U_k(T)} & = (A \nabla (\R_{k,T}f - \tR_{k,T} f), \nabla z)_{U_k(T)} \\
      & = (({\tilde A}_T - A) \nabla \tR_{k,T} f, \nabla z)_{U_k(T)} \\
      & \le \|({\tilde A}_T - A) A^{-1/2} \nabla \tR_{k,T} f\|_{L^2(U_k(T))} \cdot |z|_{A,U_k(T)}, \\
    \end{aligned}
  \end{equation*}
  which motivates the definition of $e_{f,T}$ and the asserted
  bound. The result for $e_{g,T}$ holds analogously.
\end{proof}

Regarding the computation of these error indicators, both $e_{f,T}$
and $e_{g,T}$ are straight-forward to compute, being a ratio of two
computable norms. The error indicator $e_{u,T}$ is also easy to
compute. It is the square root of a Rayleigh quotient for a
generalized eigenvalue problem (where the restriction $|w|_{A,T} = 1$
removes the singularity of the denominator matrix):
\begin{equation*}
  {\bf B} {\bf x}_\ell = \mu_\ell {\bf C} {\bf x}_\ell
\end{equation*}
with the matrices
\begin{equation*}
  \begin{aligned}
  B_{ij} &= \big(({\tilde A}_T - A)^2 A^{-1} (\chi_{T} \nabla \phi_j - \nabla \tQ_{k,T} \phi_j), \chi_{T} \nabla \phi_i - \nabla \tQ_{k,T}\phi_i\big)_{U_k(T)},\\
  C_{ij} &= (A \nabla \phi_j, \nabla \phi_i)_{T},
  \end{aligned}
\end{equation*}
for all $i,j=1,\ldots,m-1$ where $m$ is the number of basis functions
in $T$ (i.e.\ one of them removed). The squared maximum $e_{u,T}^2$
corresponds to the maximum eigenvalue $\max_\ell \mu_\ell$.  We
emphasize that the matrices $\bf B$ and $\bf C$ are very small: the same size
as the number of degrees of freedom in the coarse element $T$ (minus
one for removing the constant), e.g., $2\times 2$ for 2D simplicial
meshes or $7 \times 7$ for 3D hexahedral meshes.

\subsubsection{Coarse error indicators}
\label{sec:coarse_indicators}
In order to compute the error indicators $e_{u,T}$, $e_{f,T}$, and
$e_{g,T}$ we need access to the true coefficient $A$ and lagging
correctors $\tQ_{k,T}\phi_i$, $\tR_{k,T}f$, and $\tQ_{k,T}g$ at the
same time. This implies all lagging correctors need to be saved in
order to compute the error indicators. Since the correctors in
practice are defined on patches of a fine mesh, and the patch overlap
can be substantial, the memory requirements for saving them might be
large.  In this section, we construct an additional bound that makes
it possible to discard the lagging correctors after they have been
computed.

We construct the following bound starting from the definition of $e_{u,T}$ in Lemma~\ref{lem:errorind},
\begin{equation}
\begin{aligned}
  \label{eq:eaT_coarse}
  e_{u,T}^2 
   & {}\le \sum_{\substack{T' \in \triH \\ T'\cap U_k(T)\ne\emptyset}} \max_{\substack{w|_T\,:\,w \in V_H,\\|w|_{A,T} = 1}} \|({\tilde A}_T - A) A^{-1/2} (\chi_{T} \nabla w - \nabla \tQ_{k,T}w)\|_{L^2(T')}^2 \\
   & {}\le \sum_{\substack{T' \in \triH \\ T'\cap U_k(T)\ne\emptyset}} \|\delta_T\|^2_{L^\infty(T')} \|A^{-1/2}{\tilde A_T}^{1/2}\|^2_{L^{\infty}(T)} \cdot{} \\
   & \phantom{{}\le \sum_{\substack{T' \in \triH \\ T'\cap U_k(T)\ne\emptyset}}{}} \cdot \max_{\substack{w|_T\,:\,w \in V_H,\\|w|_{{\tilde A}_T,T} = 1}} \|{\tilde A}_T^{1/2} (\chi_{T} \nabla w - \nabla \tQ_{k,T} w)\|^2_{L^2(T')} \\
    & =: E_{u,T}.
%
\end{aligned}
\end{equation}
where $\delta_T = ({\tilde A}_T - A) A^{-1/2} {\tilde A}_T^{-1/2}$, and we used that
\begin{equation*}
|w|_{{\tilde A}_T,T} \le \|A^{-1/2}{\tilde A_T}^{1/2}\|_{L^{\infty}(T)} |w|_{A,T}
\end{equation*}
in the last inequality. We further define $E_u = \max_{T \in \triH} E_{u,T}$.

The maximum in \eqref{eq:eaT_coarse} corresponds to a maximum
eigenvalue of a low-dimensional generalized eigenvalue problem, as was
the case for $e_{u,T}$ in Section~\ref{sec:errorindicators}. More
specifically, it is the square root of the maximum eigenvalue
\begin{equation}
  {\tilde \mu}_{T,T'} := \max_\ell \mu_\ell
\end{equation}
of ${\bf B} {\bf x}_\ell = \mu_\ell {\bf C} {\bf x}_\ell$ with the
matrices
\begin{equation*}
  \begin{aligned}
  B_{ij} &= \big({\tilde A}_T (\chi_{T} \nabla \phi_j - \nabla \tQ_{k,T} \phi_j), \chi_{T} \nabla \phi_i - \nabla \tQ_{k,T}\phi_i\big)_{T'},\\
  C_{ij} &= (\tilde A_T \nabla \phi_j, \nabla \phi_i)_{T},
  \end{aligned}
\end{equation*}
for $i,j=1,\ldots,m-1$, where $m$ is the number of basis functions in
$T$.  We note that the quantity ${\tilde \mu}_{T,T'}$ can be computed
directly after corrector $\tQ_{k,T}$ has been computed for the basis
functions in element $T$. Now, $\tQ_{k,T}$ does not need to be saved
for computing $E_{u,T}$ later, and it can be discarded. In particular,
the memory required for storing ${\tilde \mu}_{T,T'}$ (which, however,
is needed to compute $E_{u,T}$) scales like $\O(k^dH^{-d})$.

Still, ${\tilde A}_T$ needs to be available to compute
$\|A^{-1/2}{\tilde A_T}^{1/2}\|^2_{L^{\infty}(T)}$ and
$\delta_T$. This might not be a problem in applications where there is
a low-dimensional description of the coefficient, for example if the
coefficient is defined by a set of geometric shapes which can be
described by location, size, shape and so on.  In the section for
numerical experiments, we will study an example of upscaled two-phase
Darcy flow, where we illustrate a way to avoid saving $\tilde A_T$.

The error indicator $E_u$ can replace $e_u$ in all results and
algorithms in this work. Similar coarse error indicators can be
derived for $e_f$ and $e_g$.

\section{Error analysis}
\label{sec:error}
In this section we study the approximation error of the three
approximations $\uk$, $\tuk$ and $\huk$, and the inf-sup stability for
the systems yielding the solutions $\ums$, $\umsk$, $\tumsk$ and
$\humsk$. Finally, in Theorem~\ref{thm:main} in
Section~\ref{sec:lagging_global}, we present a bound on the error
$u - \huk$ of the full approximation.

We use $C$ to denote a constant that is independent of the regularity
of $u$, patch size $k$ and coarse mesh size $H$. It can, however,
depend on the contrast $\frac{\beta}{\alpha}$. The value of the
constant is not tracked between steps in inequalities. By the notation
$a \lesssim b$, we mean $a \le Cb$.

\subsection{Variational multiscale method}
Since the varitional multiscale formulation \eqref{eq:vms_low} is only
a reformulation of the original problem, without any approximations,
there is no error. However, the well-posedness of the formulation is
still of interest.

\subsubsection{Stability}
Uniqueness of a solution to \eqref{eq:vms_low} is guaranteed by an inf-sup
condition for $a$ on $\Vms$ and $V_H$,
\begin{equation*}
\begin{aligned}
  \inf_{w \in \Vms} \sup_{v \in V_H} \frac{|(A \nabla w, \nabla v)|}{|w|_{A} | v |_{A}}  & = \inf_{w \in V_H} \sup_{v \in V_H} \frac{|(A \nabla (w - \Q w), \nabla v)|}{|w - \Q w|_A |v|_A} \\
  & = \inf_{w \in V_H} \sup_{v \in V_H} \frac{|(A \nabla (w - \Q w), \nabla (v - \Q v))|}{|w - \Q w|_A |v|_A} \\
  & \ge \inf_{w \in V_H} \frac{|w - \Q w|^2_A}{|w - \Q w|_A |w|_A} \\
  & = \inf_{w \in V_H} \frac{|w - \Q w|^2_A}{|w - \Q w|_A |\IH(w - \Q w)|_A} \\
  & \ge C_{\Cint}^{-1}\alpha^{1/2}\beta^{-1/2} =: \gamma. \\
\end{aligned}
\end{equation*}
The existence inf-sup condition holds analogously. We let $\gamma$
denote the inf-sup stability constant and note that it is depends on
the contrast for general $\IH$. See \cite{HeMa17,PeSc16} for corrector
localization results independent of the contrast.

\subsection{Localized orthogonal decomposition}
For the error analysis of LOD we recite previous exponential decay
results (first presented in \cite{MaPe14}) of the localized corrector
operators by means of the following lemmas. For example, the proof in
\cite[Lemma 3.6]{HeMa14} is almost directly applicable here.
\begin{lemma}[Localization error]
  \label{lem:localization}
  Let $k > 0$ be a fixed integer and let $p_T \in \Vf$ be the solution of
  \begin{equation*}
    (A \nabla p_T, \nabla \vf) = F_T(\vf)
  \end{equation*}
  for all $\vf \in \Vf$, where $F_T \in V^*$ such that $F_T(\vf) = 0$
  for all $\vf \in \Vf(\Omega \setminus T)$. Furthermore, we let
  $p_{k,T} \in \Vf(U_k(T))$ be the solution of
  \begin{equation*}
    (A \nabla p_{k,T}, \nabla \vf) = F_T(\vf)
  \end{equation*}
  for all $\vf \in \Vf(U_k(T))$. Then there exists a constant
  $0 < \theta < 1$ that depends on the contrast but not on $H$ or the
  variations of $A$, such that
  \begin{equation*}
    \left|\sum_T p_T - p_{k,T}\right|^2_A \lesssim k^{d} \theta^{2k} \sum_T\left|p_{T}\right|^2_A.
  \end{equation*}
\end{lemma}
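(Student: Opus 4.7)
The plan is to reduce the statement to a single-corrector exponential decay estimate and then pay the $k^{d}$ factor when summing over $T$ via finite overlap. I set $e_T := p_T - p_{k,T} \in \Vf$ and subtract the two defining equations. Because $F_T$ is supported in $T$ and vanishes on $\Vf(\Omega\setminus T)$, while $U_k(T) \supset T$ for $k \ge 1$, the residual satisfies
\begin{equation*}
(A\nabla e_T,\nabla \vf) = 0 \qquad \text{for all } \vf \in \Vf(U_k(T)).
\end{equation*}
So $e_T$ is $A$-orthogonal to all fine functions supported in the patch, and the only obstruction to it vanishing is the fact that $\Vf$-conforming test functions outside $U_k(T)$ cannot reach into $T$.

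Next I would run the standard cutoff/iteration argument that underlies all LOD decay proofs. For a fixed $k$, pick $m \in \{1,\dots,k-1\}$ and a piecewise linear $\triH$-cutoff $\eta_m$ with $\eta_m = 0$ on $U_{k-m-1}(T)$ and $\eta_m = 1$ outside $U_{k-m}(T)$, so $\|\nabla \eta_m\|_{L^\infty}\lesssim H^{-1}$. Then $(1-\eta_m) e_T$ is not in $\Vf$, but $\vf := (1-\IH)\bigl((1-\eta_m)e_T\bigr)$ is, and by the kernel support of $\IH$ together with \eqref{eq:interpolation} one has that $\vf$ coincides with $e_T$ on $U_{k-m-2}(T)$ and is supported in $U_k(T)$. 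Testing the defect equation with $\vf$ and using the interpolation bound in the annulus $R_m := U_{k-m+1}(T)\setminus U_{k-m-2}(T)$ produces a Caccioppoli-type inequality
\begin{equation*}
|e_T|_{A,\Omega \setminus U_{k-m}(T)}^{2} \lesssim |e_T|_{A,R_m}^{2}.
\end{equation*}
Writing the right-hand side as the difference of energies on nested patches and iterating $m$ from $1$ to $k-1$ yields the geometric estimate $|e_T|_A^{2} \le C\theta^{2k}|p_T|_A^{2}$ for some $\theta\in(0,1)$ depending only on $C_\Cint$ and the contrast $\beta/\alpha$. This is the standard bootstrap and it is exactly the content of \cite[Lemma 3.6]{HeMa14}, which the excerpt says is ``almost directly applicable''; I would invoke it here rather than redo the cutoff bookkeeping.

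Finally I would sum over $T \in \triH$. Since each $e_T$ decays exponentially away from $U_k(T)$, the sum $\sum_T e_T$ is controlled in energy by $\sum_T |e_T|_A^2$ up to a factor counting the maximum number of patches $U_k(T)$ meeting a given element, which is of order $k^{d}$. Precisely, writing
\begin{equation*}
\Bigl|\sum_T e_T\Bigr|_A^{2} = \sum_{T,T'}(A\nabla e_T,\nabla e_{T'}),
\end{equation*}
Cauchy--Schwarz and the observation that $(A\nabla e_T,\nabla e_{T'})$ is nonzero only when $U_k(T)\cap U_k(T')\ne\emptyset$ (which occurs for $\O(k^d)$ pairs per $T$) gives the claimed $k^d\theta^{2k}$ bound.

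The main obstacle is the single-corrector decay, i.e.\ constructing the cutoff-based test function so that $\vf \in \Vf(U_k(T))$, that $\vf$ equals $e_T$ on the inner region, and that the annulus commutator $\IH\bigl((1-\eta_m)e_T\bigr) - (1-\eta_m)\IH e_T$ is controlled by \eqref{eq:interpolation}; this is where the quasi-interpolation hypothesis is actually used and where the constant $\theta < 1$ is pinned down in terms of $\alpha$, $\beta$, and $C_\Cint$. The summation step, by contrast, is routine finite-overlap bookkeeping.
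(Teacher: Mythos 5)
The paper does not actually prove this lemma: it recites the result from \cite{MaPe14} and points to \cite[Lemma 3.6]{HeMa14}, so your decision to invoke that reference for the hard part is consistent with what the authors do. The issues are in the sketch you wrap around the citation, and one of them is a genuine error rather than omitted bookkeeping. Your summation step rests on the claim that $(A\nabla e_T,\nabla e_{T'})$ vanishes unless $U_k(T)\cap U_k(T')\ne\emptyset$. That is false: $e_T=p_T-p_{k,T}$ is not supported in $U_k(T)$, because only $p_{k,T}$ is locally supported while $p_T\in\Vf$ is a global function that merely decays away from $T$. Almost all cross terms are therefore nonzero and the $\O(k^d)$ pair count does not apply to them. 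The correct route to the $k^{d}$ factor writes $|\sum_T e_T|_A^2=\sum_T a(e_T,z)$ with $z=\sum_{T'}e_{T'}$, uses the Galerkin orthogonality of $e_T$ on $\Vf(U_k(T))$ to replace $z$ by $z-\vf_T$ with $\vf_T=(1-\IH)\bigl((1-\eta_T)z\bigr)\in\Vf(U_k(T))$, and controls the remainder by the energy of $z$ near the cutoff region of $U_k(T)$; the finite-overlap count is then applied to these patch regions, not to supports of the $e_T$.

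The single-corrector sketch also controls the wrong quantity. With your cutoff, $(1-\IH)\bigl((1-\eta_m)e_T\bigr)$ agrees with $e_T$ on the \emph{inner} region and the orthogonality therefore bounds the inner energy of $e_T$ by its energy on an annulus; iterating that shows $e_T$ is small near $T$ relative to $|e_T|_A$, which is not the assertion, and in particular $|p_T|_A$ never enters. The standard argument (and the one in the cited lemmas) has two separate steps: (i) exponential decay of the ideal corrector, $|p_T|_{A,\Omega\setminus U_\ell(T)}\lesssim\theta^{\ell}|p_T|_A$, obtained by a Caccioppoli iteration on exterior domains using that $F_T$ vanishes on $\Vf(\Omega\setminus T)$; and (ii) the observation that $p_{k,T}$ is the $a$-orthogonal projection of $p_T$ onto $\Vf(U_k(T))$, so $|e_T|_A\le|p_T-v|_A$ for every $v\in\Vf(U_k(T))$, after which choosing $v$ as a cutoff truncation of $p_T$ gives $|e_T|_A\lesssim|p_T|_{A,\Omega\setminus U_{k-c}(T)}\lesssim\theta^{k}|p_T|_A$. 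Your proposal never invokes the best-approximation property or the decay of $p_T$, so the leap to $|e_T|_A^2\le C\theta^{2k}|p_T|_A^2$ is unjustified as written; deferring wholesale to \cite[Lemma 3.6]{HeMa14} is fine, but the surrounding sketch should not be presented as the mechanism by which that bound is obtained.
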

This lemma can be applied for the localization error of both
$\Q - \Q_k$ and $\R - \R_k$. In analogy with the definition of $\Q_k$
as a sum of $\Q_{k,T}$, we can define $\Q = \sum_{T\in\triH} \Q_T$
with $\Q_T = \Q_{\infty,T}$. Then for any $v \in V$, we can
identify $\Q_T v$ with $p_T$ and $\Q_{k,T} v$ with $p_{k,T}$ in
the lemma above (and similarly for $\R$).

\subsubsection{Stability}
Using Lemma~\ref{lem:localization}, we get the following result for $\Q v - \Q_k v$, with $v \in H^1(\Omega)$,
\begin{equation}
  \label{eq:decay_error}
  \left|\Q v - \Q_{k} v\right|^2_A \lesssim k^{d} \theta^{2k} \sum_T\left|\Q_{T}v\right|^2_A \lesssim k^{d} \theta^{2k} |v|^2_A.
\end{equation}
If in addtion $v \in V_H$, we can use the stability of $\IH$ and continue to get
\begin{equation*}
 |v|^2_A \lesssim |\IH (v - \Q v)|^2_A \lesssim |v - \Q v|^2_A.
\end{equation*}
Using the result above, we can derive an inf-sup constant for $a$ and
the pair of spaces $\Vmsk$ and $V_H$,
\begin{equation}
  \label{eq:stab_lod}
  \begin{aligned}
    &\inf_{\wmsk \in \Vmsk} \sup_{v \in V_H} \frac{|a(\wmsk, v)|}{|\wmsk|_A |v|_A} \\
    &  \qquad \ge \inf_{w \in V_H} \sup_{v \in V_H} \frac{|a(w - \Q w, v)| - |a(\Q w - \Q_k w, v)|}{(|w - \Q w|_A + |\Q w - \Q_k w|_A) |v|_A} \\
    &  \qquad \ge \inf_{w \in V_H} \sup_{v \in V_H} \frac{|a(w - \Q w, v)| - Ck^{d/2}\theta^k|w - \Q w|_A|v|_A}{(1 + Ck^{d/2}\theta^k)|w - \Q w|_A |v|_A} \\
    &  \qquad \ge \frac{\gamma - Ck^{d/2}\theta^k}{1 + Ck^{d/2}\theta^k} =: \gamma_k.
  \end{aligned}
\end{equation}
For sufficiently large $k$, there is a uniform bound
$\gamma_0 \le \gamma_k$. See \cite{ElGiHe14} for more details on
stability of this approximation.

\subsubsection{Error}
For arbitrary $u_I \in \Vmsk$, using the equations \eqref{eq:vms_low}
and \eqref{eq:lod_corrections}, we have for all $v \in V_H$,
\begin{equation*}
  \begin{aligned}
    (A\nabla (\umsk - u_I), \nabla v) &= (A\nabla (\ums - u_I), \nabla v) + {} \\
    &\phantom{{}={}}(A\nabla (\R f - \R_k f), \nabla v) - (A\nabla (\Q g - \Q_k g), \nabla v).
  \end{aligned}
\end{equation*}
The inf-sup condition for uniqueness above yields the following
approximation result, for arbitrary $u_I \in \Vmsk$,
\begin{equation*}
  \gamma_0 |\umsk - u_I|_A \le |\ums - u_I|_A + |\R f - \R_k f|_A + |\Q g - \Q_k g|_A.
\end{equation*}
In analogy with \eqref{eq:decay_error} we get the following result for $\R f - \R_k f$,
\begin{equation*}
  \begin{aligned}
    \left|\R f - \R_{k} f\right|^2_A & \lesssim k^{d} \theta^{2k} \sum_T\left|\R_{T} f\right|^2_A \lesssim k^{d} \theta^{2k} \|f\|^2_{L^2}.
  \end{aligned}
\end{equation*}

Recall that $u = \IH\ums - \Q\IH\ums + \R f - \Q g$ and
$u_k = \IH\umsk -\Q_k\IH\umsk+ \R_k f - \Q_k g$. Now, if we choose
$u_I = \IH \ums - \Q_k\IH\ums \in \Vmsk$, then
$\ums-u_I = -(\Q-\Q_k)\IH\ums$ and using the approximation result
we get
\begin{equation}
\begin{aligned}
  \label{eq:through_interpolation}
  & |\ums - \umsk|_A \\
  & \qquad \le |\ums - u_I|_A + |\umsk - u_I|_A \\
  & \qquad \le (1+\gamma_0^{-1})|\ums - u_I|_A + \gamma_0^{-1}(|\R f-\R_k f|_A + |\Q g-\Q_k g|_A)\\
  & \qquad \le (1+\gamma_0^{-1})|(\Q - \Q_k)\IH \ums|_A + \gamma_0^{-1}(|\R f-\R_k f|_A + |\Q g-\Q_k g|_A).\\
\end{aligned}
\end{equation}
Then, using $\IH \ums = \IH u$, interpolation stability
\eqref{eq:interpolation} and stability of the continuous problem, we
have for the full error
\begin{equation}
\begin{aligned}
  \label{eq:error_lod}
  |u - u_k|_A & \le |\ums - \umsk|_A + |(\R - \R_k) f|_A + |(\Q - \Q_k) g|_A\\
  & \le (1+\gamma_0^{-1})\left(|(\Q - \Q_k)\IH \ums|_A + |\R f-\R_k f|_A - |\Q g-\Q_k g|_A\right)\\
  & \lesssim (1+\gamma_0^{-1})k^{d/2} \theta^{k}\left(|\IH \ums|_A + \|f\|_{L^2} + |g|_A\right)\\
  & \lesssim (1+\gamma_0^{-1})k^{d/2} \theta^{k}\left(|u|_A + \|f\|_{L^2} + |g|_A\right)\\
  & \lesssim (1+\gamma_0^{-1})k^{d/2} \theta^{k}(\|f\|_{L^2} + |g|_A).\\
\end{aligned}
\end{equation}
This result was first shown in \cite{MaPe14} and is noteworthy, since
the error of the approximation decays exponentially with increasing
$k$, independently of the regularity of the solution $u$.



\subsection{Lagging multiscale space}
\label{sec:error_laggingmultiscalespace}
For this step, we use a lagging multiscale space $\tVmsk$ and need to
establish an inf-sup stability constant for $a$ with respect to
$\tVmsk$ and $V_H$. We will use the results from
Lemma~\ref{lem:errorind} both for deriving stability and the
approximation error. The following full corrector error can be
derived using Lemma~\ref{lem:errorind},
\begin{equation}
  \begin{aligned}
    |\Q_kw - \tQ_k w|^2_A & = \Big|\sum_{T} (\Q_{k,T}w - \tQ_{k,T} w)\Big|^2_A \\
    & \lesssim k^d \sum_T |\Q_{k,T}w - \tQ_{k,T} w|^2_{A,U_k(T)} \\
    & \le k^d e^2_u |w|^2_A. \\
  \end{aligned}
\end{equation}
The bounds $|\R_k f - \tR_k f|^2_A \lesssim k^d e^2_f \|f\|^2_{L^2}$ and
$|\Q_k g - \tQ_k g|^2_A \lesssim k^d e^2_g |g|^2_{A}$ hold similarly.

We note that if ${\tilde A}_T = A$, then
$e_{u,T} = e_{f,T} = e_{g,T} = 0$.  Obviously, updating a lagging
coefficient for an element corrector leads to no error for this
element corrector.

\subsubsection{Stability}
\label{eq:method_3_stability}
We can now derive an inf-sup constant for $a$ on $\tVmsk$ and $V_H$,
using similar techniques as in \eqref{eq:stab_lod},
\begin{equation*}
\begin{aligned}
 & \inf_{\twmsk \in \tVmsk} \sup_{v \in V_H} \frac{|a(\twmsk, v)|}{|\twmsk|_A |v|_A} \\
  & \qquad \ge \inf_{w \in V_H} \sup_{v \in V_H} \frac{|a(w - \Q_k w, v)| - |a(\Q_k w - \tQ_k w, v)|}{(|w - \Q_k w|_A + |\Q_k w - \tQ_k w|_A) |v|_A} \\
  & \qquad \ge \inf_{w \in V_H} \sup_{v \in V_H} \frac{|a(w - \Q_k w, v)| - Ck^{d/2}e_u|w - \Q w|_A|v|_A}{(1 + Ck^{d/2}e_u)|w - \Q_k w|_A |v|_A} \\
  & \qquad \ge \frac{\gamma_k - Ck^{d/2}e_u}{1 + Ck^{d/2}e_u} =: {\tilde \gamma}_k.
\end{aligned}
\end{equation*}
We note that $k$ enters the constant, but that it can be compensated
by a small $e_u$. Since $e_{u,T}$ is computable, a rule to recompute
all element correctors $T$ with $e_{u,T} \ge \TOL(k)$ for some small
enough $\TOL(k) = \mathcal{O}(k^{-d/2})$, will (after recomputation)
make ${\tilde A}_T = A$ and $e_{u,T} = 0$. This makes $e_u < \TOL(k)$.
Following this adaptive rule makes it possible to find a lower bound
$\tilde \gamma_0 \le \tilde \gamma_k$ for sufficiently large $k$ and
sufficiently small $\TOL$.

\subsubsection{Error}
Again, we get an approximation result from the inf-sup
stability. In complete analogy with \eqref{eq:through_interpolation}
and \eqref{eq:error_lod}, we get
\begin{equation}
\begin{aligned}
  \label{eq:error_falsespace}
  & |\uk - \tuk|_A \\
  & \qquad \le (1+\tilde \gamma_0^{-1}) \left(|(\Q_k - \tQ_k)\IH \umsk|_A + |\R_k f-\tR_k f|_A + |\Q_k g-\tQ_k g|_A\right)\\
  & \qquad \lesssim (1+\tilde \gamma_0^{-1})k^{d/2}\left(e_u|u_k|_A + e_f\|f\|_{L^2} + e_g|g|_A\right)\\
  & \qquad \lesssim (1+\tilde \gamma_0^{-1})\gamma_0^{-1}k^{d/2}\max(e_u,e_f,e_g)(\|f\|_{L^2} + |g|_A).\\
\end{aligned}
\end{equation}

\subsection{Lagging global stiffness matrix contribution}
\label{sec:lagging_global}
In the fourth approximation \eqref{eq:lod_falsespacecoef}, the
coefficients for the integration of the global stiffness matrix and
(parts of) the right hand side are also lagging.
\subsubsection{Stability}
We derive an inf-sup constant for $\tilde a$ (see \eqref{eq:final_a})
with respect to $\tVmsk$ and $V_H$,
\begin{equation*}
\begin{aligned}
 & \inf_{\twmsk \in \tVmsk} \sup_{v \in V_H} \frac{|\tilde a(\twmsk, v)|}{|\twmsk|_A |v|_A} \\
  & \qquad \ge \inf_{\twmsk \in \tVmsk} \sup_{v \in V_H} (|\twmsk|_A |v|_A)^{-1} \Big(|a(\twmsk, v)| - {} \\
  & \qquad \phantom{{}={}}\Big|\sum_{T \in \triH} \int_{U_k(T)} ({\tilde A}_T-A) ( \chi_{T} \nabla - \nabla \tQ_{k,T}) \IH \twmsk \cdot \nabla v\Big| \Big) \\
  & \qquad \ge \tilde \gamma_k - \inf_{w \in V_H} \sup_{v \in V_H} \frac{\sum_T e_{u,T} \|A^{1/2}\nabla w\|_{L^2(T)} \|A^{1/2}\nabla v\|_{L^2(U_k(T))}}{|w - \tQ_k w|_A |v|_A} \\
  & \qquad \ge \tilde \gamma_k - \inf_{w \in V_H} \frac{Ck^{d/2} e_u \|A^{1/2}\nabla w\|_{L^2}}{|w - \tQ_k w|_A} \\
  & \qquad = \tilde \gamma_k - \inf_{w \in V_H} \frac{Ck^{d/2} e_u \|A^{1/2}\nabla \IH (w-\tQ_kw)\|_{L^2}}{|w - \tQ_k w|_A} \\
  & \qquad \ge \tilde \gamma_k - Ck^{d/2} e_u =: \hat \gamma_k\\
\end{aligned}
\end{equation*}
Again, $k$ enters, but can be compensated by a small $e_u$ according
to the discussion in Section \ref{eq:method_3_stability}. Thus, there
is a bound $\hat \gamma_0 \le \hat \gamma_k$ for all sufficiently
large $k$.

\subsubsection{Error}
To study the error $|\tuk - \huk|_{A}$, we first note that
$|\tuk - \huk|_{A} = |\tumsk - \humsk|_{A}$, since the right hand side
and boundary condition corrections are the same in both cases. We form
the following difference from \eqref{eq:lod_falsespace} and
\eqref{eq:lod_falsespacecoef},
\begin{equation}
  \begin{aligned}
  & a(\tumsk, v_H) - {\tilde a}(\humsk, v_H) \\
  & \qquad = \sum_T (({\tilde A}_T -A) \nabla \tR_{k,T} f, \nabla v_H) - \sum_T (({\tilde A}_T - A) \nabla \tQ_{k, T} g, \nabla v_H).
  \end{aligned}
\end{equation}
Add and subtract $a(\humsk, v_H)$ and use Lemma~\ref{lem:errorind} to get
\begin{equation*}
  \begin{aligned}
  & |a(\tumsk - \humsk, v_H)| \\
  & \qquad = \Big|\sum_T(({\tilde A}_T - A)(\chi_T \nabla - \nabla \tQ_{k,T})\IH\humsk, \nabla v_H) + {}\\
  & \qquad \phantom{{}={}\big|} \sum_T (({\tilde A}_T -A) \nabla \tR_{k,T} f, \nabla v_H) - {} \\
  & \qquad \phantom{{}={}\big|} \sum_T (({\tilde A}_T - A) \nabla \tQ_{k, T} g, \nabla v_H)\Big|\\
  & \qquad \le \sum_T \Big(\|({\tilde A}_T - A)A^{-1/2}(\chi_T \nabla - \nabla \tQ_{k,T})\IH\humsk\|_{L^2(U_k(T))} + {} \\
  & \qquad \hphantom{{}= \sum_T \Big(} \|({\tilde A}_T -A)A^{-1/2} \nabla \tR_{k,T} f\|_{L^2(U_k(T))} + {} \\
  & \qquad \hphantom{{}= \sum_T \Big(} \|({\tilde A}_T -A)A^{-1/2} \nabla \tQ_{k,T} g\|_{L^2(U_k(T))}\Big) |v_H|_{A,U_k(T)} \\
  & \qquad \lesssim k^{d/2} \Big( \sum_T e_{u,T}^2 |\IH \humsk|_{A,T}^2 + e_{f,T}^2 \|f\|_{L^2(T)}^2 + e_{g,T}^2 |g|_{A,T} \Big)^{1/2} |v_H|_A \\
  & \qquad \le k^{d/2} \max(e_u, e_f, e_g) \big(|\IH \humsk|_{A} + \|f\|_{L^2} + |g|_{A} \big) |v_H|_A .
  \end{aligned}
\end{equation*}
Inf-sup stability for $a$ and $\tilde a$ finally gives, for any $v_H \in V_H$,
\begin{equation}
\begin{aligned}
  \label{eq:error_falsespacecoef}
  |\tumsk - \humsk|_A & \le {\tilde \gamma}_0^{-1} \frac{|a(\tumsk - \humsk, v_H)|}{|v_H|_A} \\
  & \lesssim {\tilde \gamma}_0^{-1} k^{d/2} \max(e_u, e_f, e_g) \big(|\IH \humsk|_{A} + \|f\|_{L^2} + |g|_{A} \big) \\
  & \lesssim {\tilde \gamma}_0^{-1} {\hat \gamma}_0^{-1} k^{d/2} \max(e_u, e_f, e_g) \big(\|f\|_{L^2} + |g|_{A} \big). \\
\end{aligned}
\end{equation}

We conclude this section by presenting the main theoretical result of
this paper. It gives a bound of the full error of $\huk$ (in energy
norm) in terms of the patch size $k$ and the error indicators $e_u$,
$e_f$, and $e_g$ defined in Lemma~\ref{lem:errorind}. This theorem
forms the basis for the implementation of a method that updates the
multiscale space adaptively while iterating through the sequence of
coefficients.
\begin{theorem}[Error bound for multiscale method with lagging coefficient]
  \label{thm:main}
  Assume $k$ is sufficiently large, so that $\gamma_k \ge \gamma_0$
  holds. Let $\TOL = c k^{-d/2}$ and (by recomputation of element
  correctors) $\max(e_u, e_g, e_f) \le \TOL$. Choose $c$ sufficiently
  small so that $\tilde \gamma_k \ge \tilde \gamma_0$, and
  $\hat \gamma_k \ge \hat \gamma_0$. Further, let $u$ solve
  \eqref{eq:continuous} and $\humsk$ solve
  \eqref{eq:lod_falsespacecoef}. Let
  $\huk = \humsk + \tR_k f - \tQ_k g$. Then
  \begin{equation*}
    \begin{aligned}
      |u - \huk|_A & \lesssim k^{d/2}(\theta^{k} + \TOL)(\|f\|_{L^2} + |g|_A),
    \end{aligned}
  \end{equation*}
  where the hidden constant depends on the contrast but is
  independent of mesh size $H$, patch size $k$ and regularity of the
  solution $u$.
\end{theorem}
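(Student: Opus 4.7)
The plan is to bound $|u - \huk|_A$ by a telescoping triangle inequality through the three intermediate solutions already analyzed in Sections~4.2--4.4:
\begin{equation*}
|u - \huk|_A \le |u - \uk|_A + |\uk - \tuk|_A + |\tuk - \huk|_A.
\end{equation*}
Each of these three quantities has already been estimated, so the proof should essentially be a matter of assembling \eqref{eq:error_lod}, \eqref{eq:error_falsespace} and \eqref{eq:error_falsespacecoef} and invoking the assumptions to make the stability constants $\gamma_k, \tilde\gamma_k, \hat\gamma_k$ uniformly bounded below by $\gamma_0, \tilde\gamma_0, \hat\gamma_0$ respectively.

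More concretely, I would first note that the assumption that $k$ is sufficiently large for $\gamma_k \ge \gamma_0$ to hold makes \eqref{eq:error_lod} directly applicable, yielding a contribution of order $k^{d/2}\theta^k (\|f\|_{L^2} + |g|_A)$. Second, since $\max(e_u,e_g,e_f) \le \TOL = ck^{-d/2}$ with $c$ chosen small enough that $\tilde\gamma_k \ge \tilde\gamma_0$, the bound \eqref{eq:error_falsespace} applies and contributes a term of order $k^{d/2}\TOL (\|f\|_{L^2} + |g|_A)$ (absorbing the $(1+\tilde\gamma_0^{-1})\gamma_0^{-1}$ prefactor into the hidden constant, which is allowed to depend on the contrast). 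Third, since $c$ is chosen small enough that also $\hat\gamma_k \ge \hat\gamma_0$, the bound \eqref{eq:error_falsespacecoef} applies and contributes another term of order $k^{d/2}\TOL (\|f\|_{L^2} + |g|_A)$.

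Summing the three contributions and absorbing all constants depending on $\gamma_0, \tilde\gamma_0, \hat\gamma_0$ and the contrast into the hidden constant in the $\lesssim$ notation yields
\begin{equation*}
|u - \huk|_A \lesssim k^{d/2}\bigl(\theta^k + \TOL\bigr)\bigl(\|f\|_{L^2} + |g|_A\bigr),
\end{equation*}
which is the claimed bound. There is no real obstacle: all the heavy lifting (localization decay, perturbation analysis for lagging coefficients in the correctors, and perturbation analysis for the bilinear form) has been carried out in the preceding subsections; the proof only needs to observe that the three error sources are additive and that the hypotheses on $k$ and $\TOL$ simultaneously make each inf-sup constant uniformly positive. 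The one thing to be a little careful about is that the constant $c$ used to define $\TOL$ must be chosen small enough for \emph{both} the $\tilde\gamma$ and $\hat\gamma$ lower bounds to hold, but this is immediate since both conditions are of the form $C k^{d/2} e_u$ being smaller than a positive quantity, so they are satisfied whenever $c$ is small.
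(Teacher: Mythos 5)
Your proposal is correct and follows essentially the same route as the paper: the paper's proof is exactly the telescoping triangle inequality through $\uk$ and $\tuk$, combining \eqref{eq:error_lod}, \eqref{eq:error_falsespace}, and \eqref{eq:error_falsespacecoef}, absorbing the factors involving $\gamma_0^{-1}$, $\tilde\gamma_0^{-1}$, $\hat\gamma_0^{-1}$ into the hidden constant, and then invoking $\max(e_u,e_f,e_g)\le\TOL$.
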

\begin{proof}
  The estimate of the full error $|u - \huk|_A$ is obtained by
  combining \eqref{eq:error_lod}, \eqref{eq:error_falsespace}, and
  \eqref{eq:error_falsespacecoef}, and using the triangle
  inequality,
  \begin{equation*}
    \begin{aligned}
      |u - \huk|_A & \le |u - \uk|_A + |\uk - \tuk|_A + |\tuk - \huk|_A  \\
      & \lesssim  k^{d/2} \Big((1+\gamma_0^{-1}) \theta^{k} + ((1+\tilde \gamma_0^{-1})\gamma_0^{-1} + {\tilde \gamma}_0^{-1} {\hat \gamma}_0^{-1}) \max(e_u, e_f, e_g)\Big) \cdot \\
      & \phantom{{}\lesssim{}} \cdot(\|f\|_{L^2} + |g|_A)\\
      & \lesssim k^{d/2}(\theta^{k} + \max(e_u, e_f, e_g))(\|f\|_{L^2} + |g|_A)\\
    \end{aligned}
  \end{equation*}
  and finally using the assumed bounds of $e_u$, $e_f$ and $e_g$.
\end{proof}

\begin{remark}[Selecting parameters $H$, $k$ and $\TOL$]
  The coarse mesh size parameter $H$ is typically chosen based the
  desired accuracy of the computation. The localization parameter $k$
  is chosen to be proportional to $|\log(H)|$ guaranteeing a
  perturbation of the approximation of the order
  $H|\log(H)|^{d/2}$. Finally, $\TOL$ is chosen proportional to
  $H$. The resulting error bound in energy norm then reads $\lesssim
  |\log(H)|^{d/2}H(\|f\|_{L^2} + |g|_A)$.
\end{remark}

\section{Implementation}
\label{sec:implementation}
In this section, we present an algorithm for computing approximate
solutions to a sequence of problems as described by
\eqref{eq:mainproblem}. In a practical implementation we can not let
$V$ be an infinite dimensional space. We will assume that there is a
finite element space $V_h$ based on a mesh that resolves the
coefficient, which if used to solve \eqref{eq:continuous}, yields an
approximate solution $u_h$ with satisfactory small error
$|u-u_h|_A$. The analysis in the previous sections holds also if
replacing $V$ with $V_h$, however, the error estimates will then of
course be bounding $|u_h - \huk|_A$ instead of $|u - \huk|_A$. In the
end of the section we discuss the memory requirements of the
algorithm.

The key idea is that, as time $n$ progresses, we
do not update the full multiscale space, but only the parts where it
is necessary for a sufficiently small error. If $A^n$ only changes
slightly between two consecutive $n$, it is possible that many of the
element correctors ($\tQ_{k,T}v_H$, $\tR_{k,T}f$ and $\tQ_{k,T}g$)
based on lagging coefficients do not need to be recomputed. We use the
error indicators $e_u$, $e_f$ and $e_g$ to determine for which
elements to recompute correctors.
This results in an algorithm that is completely parallelizable over
$T$, except for the solution of the low-dimensional (posed in $V_H$)
global system. Even the assembly of the global stiffness matrix ${\bf K} = (K_{ij})_{ij}$ and
right hand side ${\bf b} = (b_i)_i$ can be done in parallel, as it becomes a reduction
over $T$.

The algorithm is presented in Algorithm~\ref{alg:full}. We denote by
$\phi_i \in V_H$, $i = 1,2,\ldots$ the finite element basis functions
spanning $V_H$.

\newcommand\mycommfont[1]{{#1}}
\SetCommentSty{mycommfont}
\begin{algorithm}[H]
  \SetKwInOut{Input}{Input}\SetKwInOut{Output}{Output}
  \SetKwComment{Comment}{eq.\ }{}
  \SetKwComment{CommentNo}{}{}
  \DontPrintSemicolon

  Pick $k$, $\TOL$ and number of time steps $N$\;
  Let ${\tilde A}_T \leftarrow A^1$ for all $T$\;
  Compute $\tQ_{k,T}\phi_i$, $\tR_{k,T}f$ and $\tQ_{k,T}g$ for all $T$ and $i$ \Comment*{\eqref{eq:lagged_corrections}}
  \For{$n=1,\ldots,N$}{
    Let $A \leftarrow A^n$ be the true coefficient for this time step\;
    \For{all $T$}{
      Compute $e_{u,T}$, $e_{f,T}$ and $e_{g,T}$ \CommentNo*{Lemma~\ref{lem:errorind}}
      \If{$\max(e_{u,T}, e_{f,T}, e_{g,T}) \ge \TOL$}{
        Update lagging coefficient ${\tilde A}_T \leftarrow A^n$ \;
        Recompute $\tQ_{k,T}\phi_i$, $\tR_{k,T}f$ and $\tQ_{k,T}g$ \Comment*{\eqref{eq:lagged_corrections}}
      }
      Update $K_{ij} \mathrel{+}= \tilde a_T(\phi_j, \phi_i)$ using $\tQ_{k,T}\phi_i$ \Comment*{\eqref{eq:final_a}}
      Update $b_{i} \mathrel{+}= \tilde L_T(\phi_i)$ using $\tR_{k,T}f$ and $\tQ_{k,T}g$ \Comment*{\eqref{eq:final_L}}
    }
    Solve for $\humsk$, by computing ${\bf K}^{-1}{\bf b}$ \Comment*{\eqref{eq:lod_falsespacecoef}}
    Compute $\huk = \humsk + \tR_k f - \tQ_k g$ if needed
  }
 \caption{Main algorithm}
 \label{alg:full}
\end{algorithm}
Note that the {\bf if}-statement in this algorithm together with
properly chosen $k$ and $\TOL$, ensures that the conditions for
Theorem~\ref{thm:main} are fulfilled. The numerical experiment in
Section~\ref{sec:full_experiment} investigates the relations between
the error and $\TOL$ and the fraction of recomputed element
correctors.

The memory required to perform the main algorithm grows with $k$ in
the following manner. Suppose $\O(h^{-d})$ is the number of elements
in the fine discretizations, as is the case for quasi-uniform
meshes. To compute $e_u$, $e_f$ and $e_g$, we need to keep $\tilde
A_T$, $\tQ_{k,T}\phi_i$, $\tR_{k,T}f$, and $\tQ_{k,T}g$ between the
iterations in Algorithm~\ref{alg:full} (see
Lemma~\ref{lem:errorind}). Since the patches $U_k(T)$ overlap by
$\O(k^{d})$ coarse elements, the amount of memory required between the
iterations scales like $\O(k^{d}h^{-d})$. In high dimensions for very
fine meshes, the amount of memory needed for storage can become a
limitation. Depending on the application, it is possible to reduce the
memory requirements. Below we give two examples of such applications.
\begin{figure}[h]
  \centering
  \includegraphics{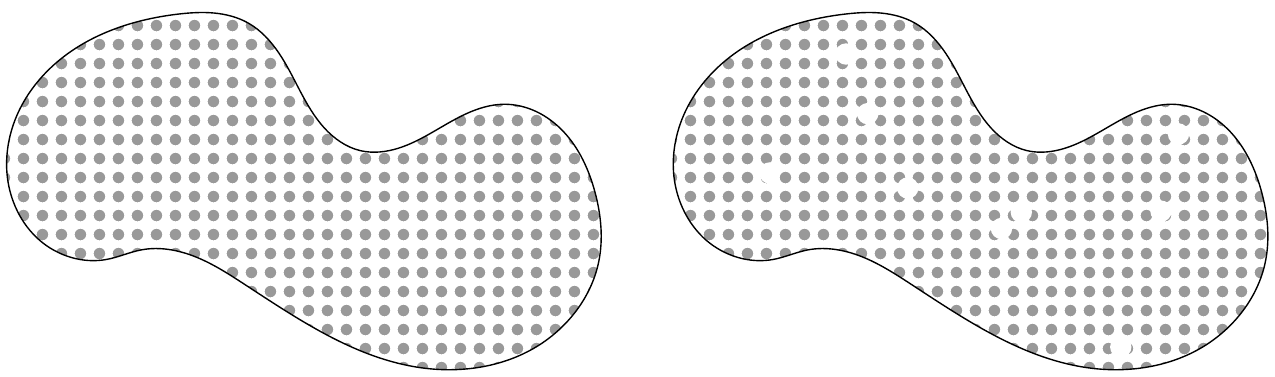}
  \caption{Defects in composite materials. Gray and white areas
    symbolizes two different values of the coefficient. Left: Reference material, lagging
    coefficient ${\tilde A}_T$. Right: Defect material, true coefficient $A$. }
  \label{fig:omega}
\end{figure}
\begin{example}[Defects in composite materials]
  For simulations on weakly random materials \cite{BrAn11}, we
  consider the coefficient of a reference material and a material with
  random defects shown in Figure~\ref{fig:omega}. If each ball in this
  material has a certain low probability to be missing (a localized
  point defect), the proposed method can be used to solve the model
  problem with the defect material on the right (true coefficient)
  using correctors precomputed on the reference material on the left
  (lagging coefficient). In sample based methods for stochastic
  integration (e.g.\ Monte Carlo), the proposed method for determining
  what correctors to recompute can reduce the computational cost for
  the full simulation.

  The lagging coefficient $\tilde A_T$ in this example is the single
  reference coefficient and thus the same for all $n$ and all
  $T$. Because of this, no additional memory is required to store
  lagging coefficients in this case. If we additionally use the (less
  efficient) coarse error indicators $E_{u,T}$, $E_{f,T}$, and
  $E_{g,T}$ presented in Section~\ref{sec:coarse_indicators}, our
  memory requirement scales with $\O(k^{d}H^{-d} + h^{-d})$ between
  the iterations in the algorithm.
\end{example}
\begin{example}[Two-phase Darcy flow]
  In a discretization of a two-phase Darcy flow system of equations
  (pressure and saturation equation) for an injection scenario, the
  permeability coefficient $A = A(s^n)$ varies over time $n$
  indirectly through the dependence on the saturation
  $s^n$. Typically, the change in saturation between time steps is
  localized to the front of the plume of the injected fluid. Thus,
  most corrector problems can be expected to be reused between
  iterations. An approach to reducing the memory requirements for the
  solution of this problem is revisited in detail in
  Section~\ref{sec:upscaling}.
\end{example}

\section{Numerical experiments}
\label{sec:numerical}
In all numerical experiments, we use $\Q_1$ Lagrange finite elements
in 2D or 3D on rectangular or rectangular cuboid elements. The degrees
of freedom are the values of the polynomial in the corners of the
element.
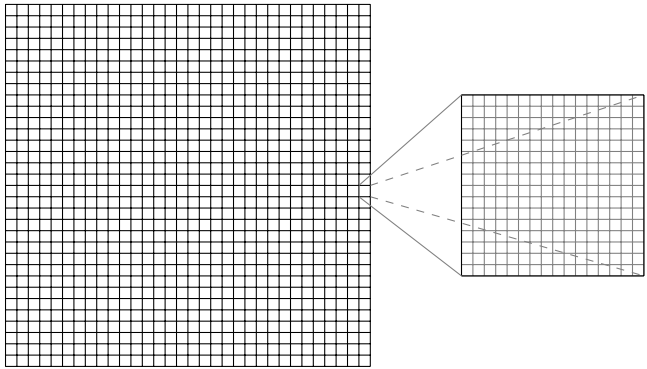
\begin{figure}[htb]
  \centering
  \begin{tikzpicture}[scale=0.15]
    \draw[step=1cm,black] (0,0) grid (32,32);
    \draw[xshift=40cm,yshift=8cm,step=1cm,gray] (0,0) grid (16,16);
    \draw[xshift=40cm,yshift=8cm,step=16cm,black] (0,0) grid (16,16);
    \draw[gray] (31,15) -- (40, 8);
    \draw[gray, dashed] (32,15) -- (56, 8);
    \draw[gray] (31,16) -- (40, 24);
    \draw[gray, dashed] (32,16) -- (56, 24);
  \end{tikzpicture}
  \caption{Illustrates a $32 \times 32$ coarse grid in 2D for
    $V_H$. To the right: A magnification of one coarse element revealing the
    fine discretization of $V$.}
  \label{fig:problem1_mesh}
\end{figure}

We define the interpolation operator $\IH$ to be used throughout the
experiments. Let $P_1$ denote the polynomials of no partial degree
greater than $1$, i.e.\ $\partial^2 p/\partial x^2 = 0$ for all
independent variables $x$ if $p \in P_1$. We define the broken finite
element space,
\begin{equation*}
  S_{H,\rm{b}} = \{ v \in L^2(\Omega) \,:\, v|_T \in P_1 \text{ for all } T \in \triH\}.
\end{equation*}
We denote by $\Pi_H$ the $L^2$-projection onto $S_{H,\rm{b}}$ and by
$E_H : S_{H,\rm{b}} \to V_H$ the boundary condition conforming node
averaging operator (Oswald interpolation operator), for all nodes in
$\triH$,
\begin{equation*}
  (E_H v)(x) = \begin{cases}
    0 &\text{if } x \in \Gamma_D, \\
    \operatorname{card}(T_x)^{-1}\sum_{T \in T_x} v|_T(x)&\text{otherwise}, \\
  \end{cases}
\end{equation*}
where $T_x = \{ T \in \triH \,:\, x \in \overline{T} \}$ and
$\operatorname{card}$ is the cardinality. Then we define
$\IH = E_H \circ \Pi_H$. This operator satisfies \eqref{eq:interpolation},
see e.g.\ \cite{DeEr12}.

\subsection{Experiments studying the effects of $k$ and $\TOL$}
\label{sec:full_experiment}
We let $\Omega = [0, 1]^2$,
$\Gamma_D = \{x \in \partial \Omega : x_1 = 0 \text{ or } x_1 = 1\}$,
$\Gamma_N = \partial \Omega \setminus \Gamma_D$, $f = 0$, $g = 1-x_1$,
and $A_{\rm b}$ as shown in Figure~\ref{fig:problem1_coef}. $A$ was constructed by taking a
uniform grid with $512 \times 512$ cells, and assigning each grid cell
a value $10^{c}$, where $c$ was drawn from a uniform distribution
between $[-2, 0]$, for each cell independently. Then, the values in
cells whose midpoint $x_{\rm m} = (x_{{\rm m},1}, x_{{\rm m},2})$ satisfied $15/32 \le x_{{\rm m},1} \le 1/2$
were set to $10^{-2}$. Finally, the values in
cells whose midpoint $x_{\rm m}$ satisfied $1/4 \le x_{{\rm m},2} \le 5/16$
were set to $1$.
\begin{figure}[h]
  \centering
  \begin{subfigure}[t]{0.45\textwidth}
    \centering
    \includegraphics[width=4cm, frame]{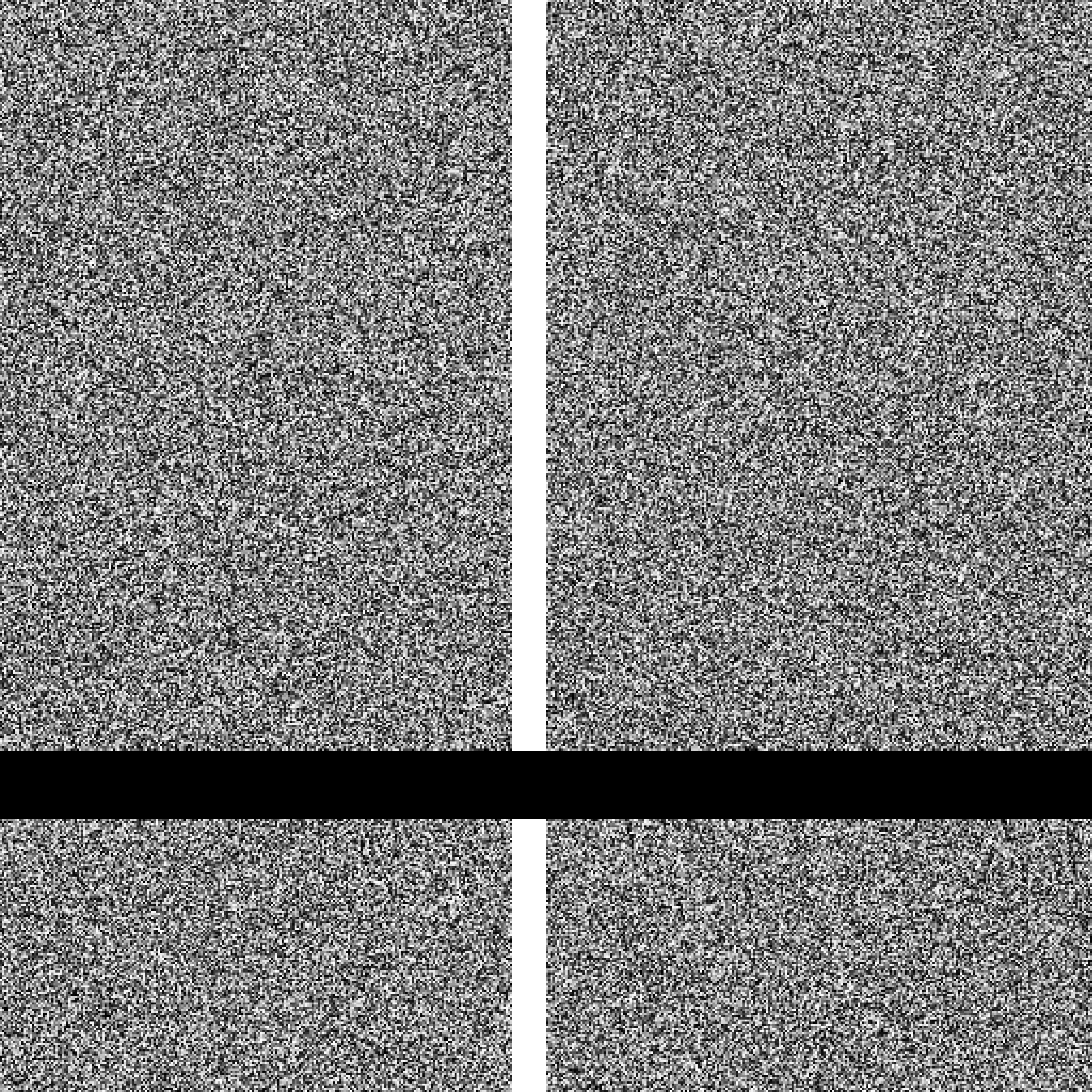}
    \caption{Coefficient $A_{\rm b}$. White means $A_{\rm b}=10^{-2}$,
      black means $A_{\rm b}=1$. The intensity of the grayscale scales
      linearly with $\log_{10}(A_{\rm b})$.}
    \label{fig:problem1_coef}
  \end{subfigure}
  \hspace{1em}
  \begin{subfigure}[t]{0.45\textwidth}
    \centering
    \includegraphics[width=4cm, frame]{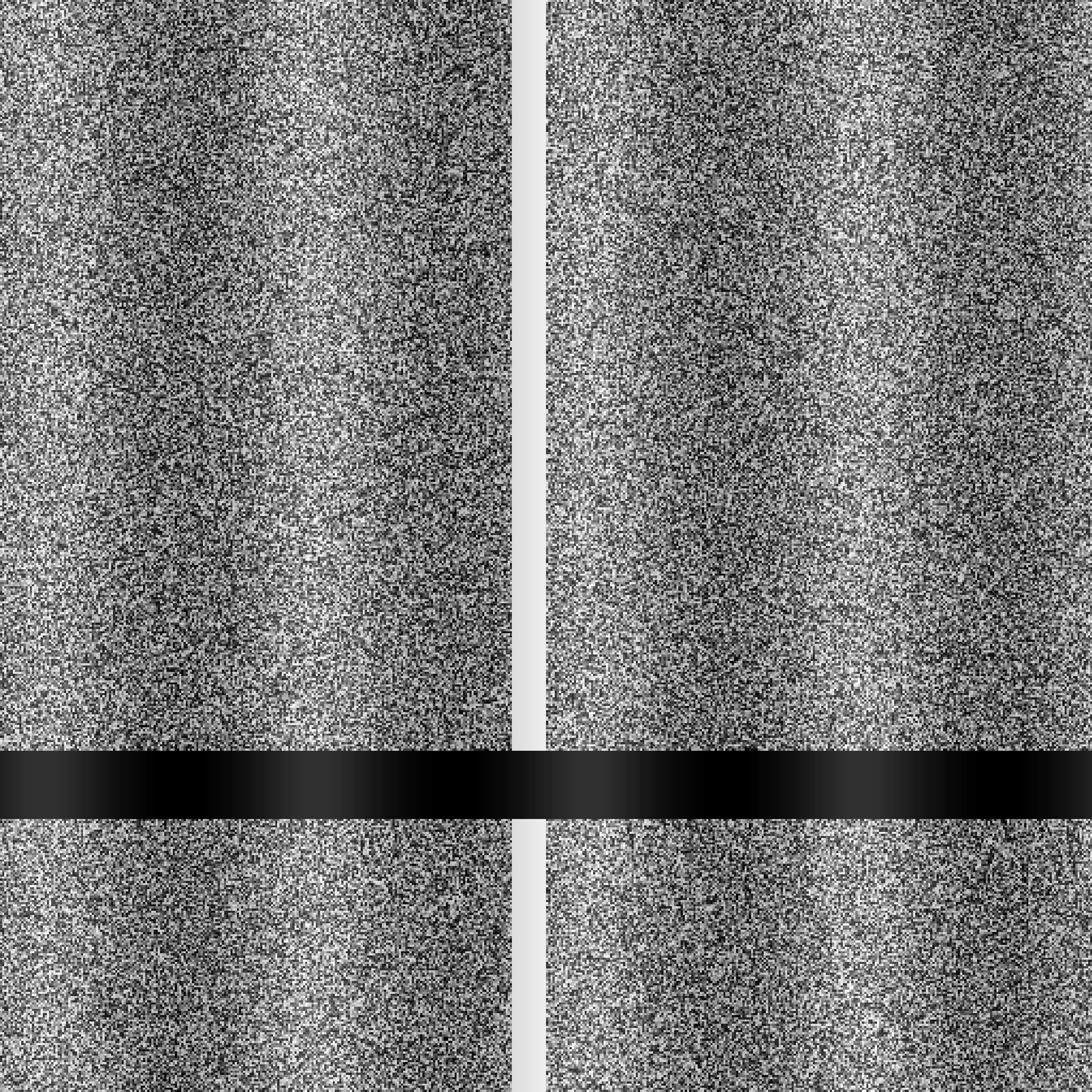}
    \caption{Coefficient $A_{n}$ for $n=13$. The color scale spans $[10^{-2}, 3]$.}
    \label{fig:problem1_coef2}
  \end{subfigure}
  \caption{Coefficients for the main algorithm experiment.}
\end{figure}
The space $V$ is discretized on a $\Q_1$ finite element space on a
uniform grid of size $512 \times 512$, see
Figure~\ref{fig:problem1_mesh}. The $V_H$ is chosen as a $\Q_1$ finite
element space on the coarse mesh shown in the same figure.

\subsubsection{Error decay with $k$}
First, we let $A = A_{\rm b}$ and solve for $u_k$ for $k = 1,2,3,$ and
$4$. We solve for $u$ on the fine mesh and use that as reference
solution.  The exponential convergence in terms of $k$ can be observed
in Figure~\ref{fig:problem1_kconv}.
\begin{figure}[h]
  \centering
  \includegraphics{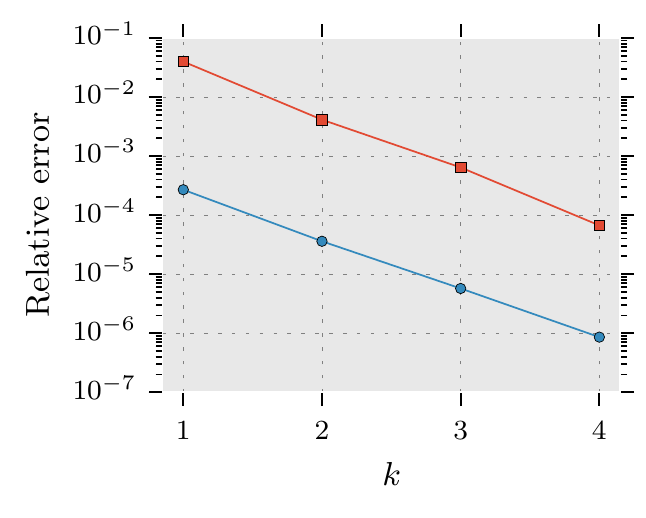}
  \caption{The data sets {\scriptsize $\square$} and $\circ$
    correspond to the (relative) errors $|u - u_k|_A$ and
    $\|\mathcal{I}_H (u - u_k) \|_{L^2}$, respectively.}
  \label{fig:problem1_kconv}
\end{figure}

\subsubsection{Error decay with $\TOL$}
Now we fix $k=3$ and define a sequence of
coefficients $A^n$ for $n=0,\ldots,127$,
\begin{equation*}
  A^n(x) = A_{\rm b}(x)\cdot(2+\sin(8\pi(x_1-n/128)))
\end{equation*}
This describes a perturbation of a factor up to $3$ over the full
domain, sweeping from the left to the right. We emphasize that the
difference $A_{n+1}-A_{n}$ is nonzero everywhere, which means a
strategy to determine which correctors to recompute is necessary.  We
use Algorithm~\ref{alg:full} to compute the approximate solution
${\hat u}_k$ for every time step $n$. A reference solution $u$ is also
computed. We do this for four values of $\TOL = 0.5, 0.1, 0.05$, and
$0.01$.

The (relative) error in energy norm versus the time step $n$ is
plotted to the left in Figure~\ref{fig:problem1_nstep}. The right plot
in the same figure shows the fraction of all element correctors
$\tQ_{k,T}$ that were recomputed in each time step. We note that the
error decreases with decreasing $\TOL$ as expected and that the
fraction of recomputed element correctors increase with decreasing
$\TOL$. Without an adaptive strategy, all element correctors would
have to be recomputed in every time step. See
Figure~\ref{fig:problem1_nstep_recomputedmap_0p1} for two maps over
the recomputed element correctors in time step $n=31$ for two
different values of $\TOL$.
\begin{figure}[h]
  \centering
  \includegraphics{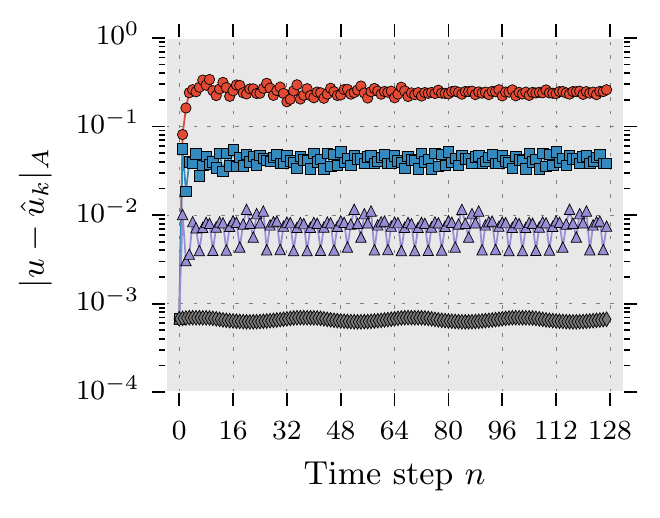}%
  \hspace{1em}%
  \includegraphics{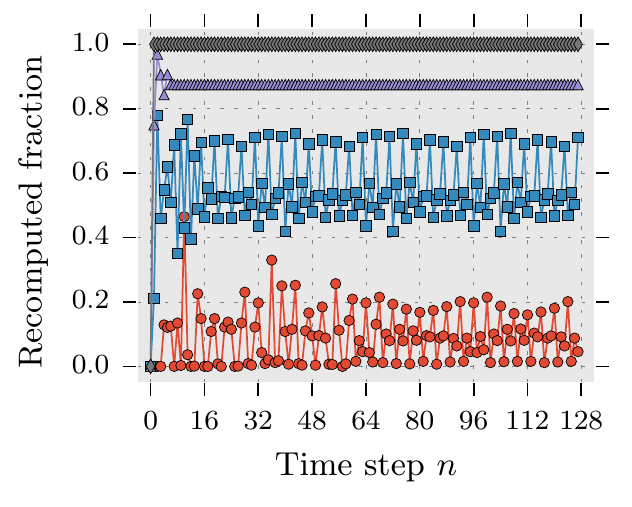}
  \caption{Error and recomputed fraction plots. The data sets $\circ$,
    {\scriptsize $\square$}, {\scriptsize $\triangle$}, and $\diamond$ correspond to $\TOL = 0.5, 0.1, 0.05$, and
    $0.01$, respectively.}
  \label{fig:problem1_nstep}
\end{figure}

\begin{figure}[h]
  \centering
  \includegraphics[width=5cm]{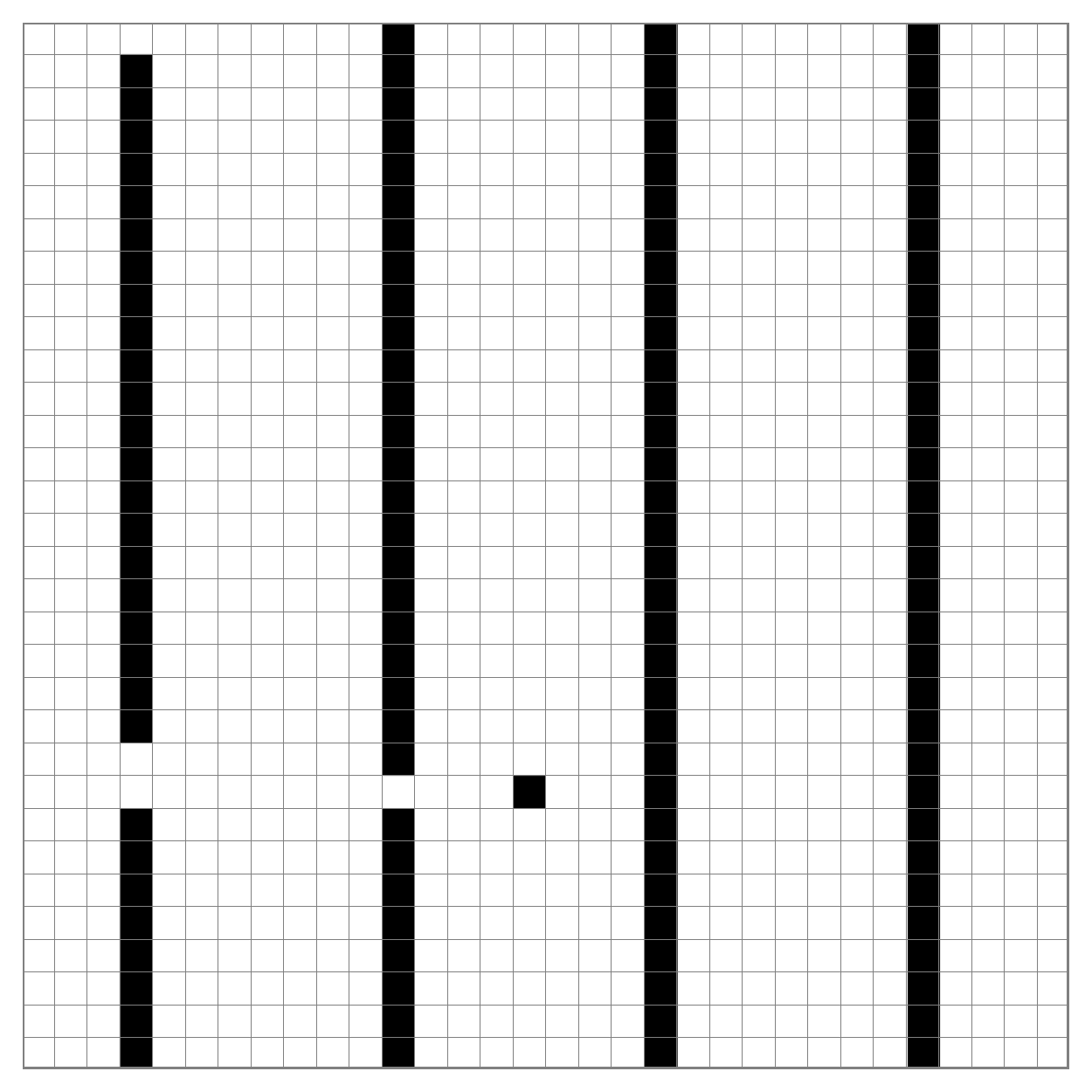}%
  \hspace{3em}%
  \includegraphics[width=5cm]{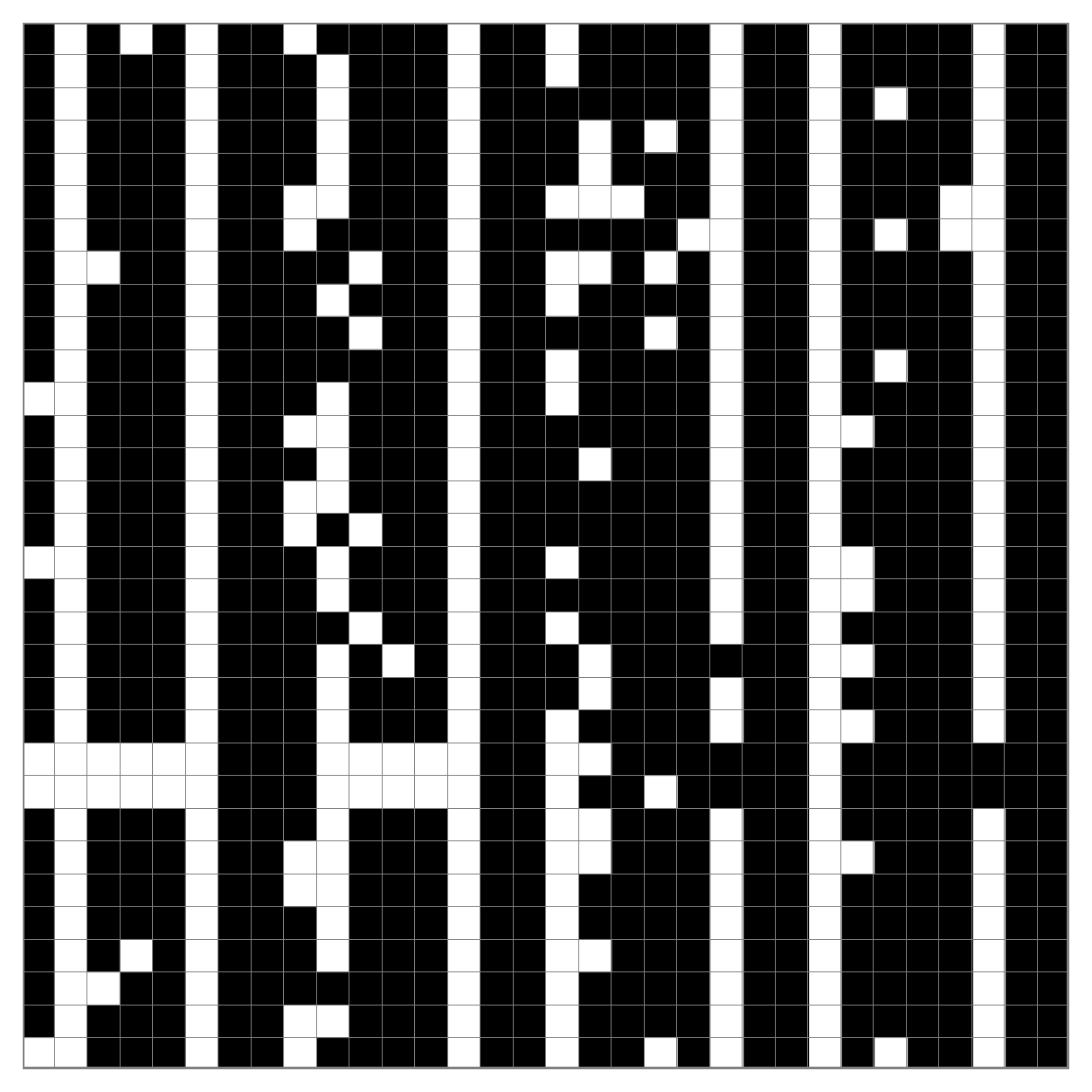}
  \caption{Time step $n=31$. Elements $T$ for which $\tQ_{k,T}$ were
    recomputed are colored black. Left and right figure show the
    recomputed element correctors for $\TOL = 0.5$ and $\TOL = 0.1$,
    respectively.}
    \label{fig:problem1_nstep_recomputedmap_0p1}
\end{figure}

\subsection{Low-memory Darcy flow upscaling algorithm}
\label{sec:upscaling}
In order to continue with two additional numerical experiments (in
Section~\ref{sec:darcy_experiments}), we describe an algorithm for
pressure solution upscaling for Darcy flows that reduces the space
complexity to $\O(k^dH^{-d} + h^{-d})$ (from $\O(k^{d}h^{-d})$). This is done by solving the
saturation equation on the coarse mesh and the pressure equation with
saturation dependent diffusion coefficient on a fine mesh using the
proposed adaptive multiscale method. This is possible in a situation
where the diffusion coefficient cannot be averaged on the coarse mesh,
but the saturation solution can. The low space complexity and the
possibility to parallelize the corrector computations enable the
solution of large-scale problems of this kind.

\subsubsection{A Two-phase Darcy flow model problem}
We consider the immiscible non-capillary two-phase Darcy flow problem
using the fractional flow formulation \cite{He97, NoCe12}. This leads
to a system of a coupled pressure and saturation equation
\begin{equation}
  \begin{aligned}
    \label{eq:twophase}
    -\div \lambda(s) K \nabla u & = f, \\
    \frac{\partial s}{\partial t} - \div \lambda_w(s) K  \nabla u & = f, \\
  \end{aligned}
\end{equation}
where space-time functions: $u$, $s$ and $f$ are pressure, saturation
for the wetting phase, and sources/sinks, respectively; space function
$K$ is intrinsic permeability; and nonlinear scalar functions:
$\lambda$ and $\lambda_w$ are total mobility and wetting phase
mobility, respectively. A common technique used for solving this
system is sequential splitting, where the pressure equation and
saturation equation are solved separately within a time step $n$. This
means that, as we iterate in time, we need to solve a sequence of
pressure equations with coefficient $A^n(x) = \lambda(s(x, t_{n-1}))
K(x)$.  Since the wetting saturation $s$ changes only significantly
along the plume front between time steps, we are in the setting where
consecutive differences in the coefficient is localized.

The permeability $K$ varies on a fine scale, requring these variations
to be resolved by a fine mesh with mesh size $h$ in order to obtain an
accurate pressure solution. We consider the case when the saturation
equation needs only be solved on a coarser mesh with mesh size $H > h$
to obtain a sufficiently accurate saturation solution.  We let the
fine mesh $\trih$ be a refinement of the coarse mesh $\triH$. The
pressure and saturation equations are solved sequentially: Given
initial data for the saturation, the pressure equation is solved. An
approximation of the coarse element face flux is computed and used to
solve for the next saturation using a zeroth order upwind
discontinuous Galerkin method with explicit Euler forward
time-stepping.

We use the same discretization scheme as in \cite{OdWhKvLa17}.  Let
$P_0(\triH)$ be the space of piecewise constants on the elements of
$\triH$. Let $\F_H$ denote set of faces of $\triH$. Each face
$F \in \F_H$ has a normal direction ${\bf n}_F$ (outward pointing for
boundary faces). We define the jump operator over face $F$ as
$\jump{v} = (v|_{T_1})|_F {\bf n}_{1}\cdot {\bf n}_F + (v|_{T_2})|_F
{\bf n}_{2}\cdot {\bf n}_F$,
where ${\bf n}_1$ and ${\bf n}_2$ are the outward pointing face
normals of the two elements $T_1$ and $T_2$ adjacent to $F$. Let
$\langle \cdot, \cdot \rangle_\omega$ denote the $L^2$-scalar product
when $\omega$ is $d-1$-dimensional. We let the flow be completely
driven by boundary conditions, i.e.\ $f = 0$.  We use the following
discretization for the saturation equation. Given
$s_H^{n-1} \in P_0(\triH)$, ${\sigma}^{n} \in L^{1}(\F_H)$, find
$s_H^{n} \in P_0(\triH)$ such that for all $r_H \in P_0(\triH)$,
\begin{equation}
  \begin{aligned}
    \label{eq:transport}
    \Delta t^{-1} (s_H^{n} - s_H^{n-1}, r_H) & = - \langle \psi(s_{H,\rm{upw}}^{n-1}) {\sigma}^{n}, \jump{r_H} \rangle_{\F_{H,I}} - {} \\
    &\phantom{{}={}} \langle \psi(s_H^{n-1}) {\sigma}^{n}, r_H\rangle_{\F_{H,\rm{out}}} - \langle \psi(s_B) {\sigma}^{n}, r_H\rangle_{\F_{H,\rm{in}}}.
  \end{aligned}
\end{equation}
Here ${\sigma}^{n}$ is an upscaled total flux quantity approximating (over face
$F$) ${\sigma^n}|_F \approx - {\bf n}_F \cdot \lambda(s) K \nabla (u + g)$; the sets $\F_{H,I}$,
$\F_{H,\rm{out}}$, and $\F_{H,\rm{in}}$ contain interior faces, Dirichlet
boundary faces with outgoing and ingoing flux, respectively; $s_B$ is
the saturation boundary condition; and $s_{H,\rm{upw}}^{n}$ is the
upwind saturation
\begin{equation}
  s_{H,\rm{upw}}^{n}|_F = \begin{cases}
    (s_{H}^{n}|_{T_1})|_F, \quad \text{if } {\sigma}^n \ge 0, \\
    (s_{H}^{n}|_{T_2})|_F, \quad \text{if } {\sigma}^n < 0, \\
    \end{cases}
\end{equation}
where $T_1$ and $T_2$ are adjacent to $F$ and ${\bf n}_F$ points from
$T_1$ to $T_2$; and the function $\psi(s) = \lambda_w(s)/\lambda(s)$
is the so called fractional flow function. The discretization of the
pressure equation is: find $u_h^{n} \in V_h$, so that for all $v_h \in V_h$,
\begin{equation}
  \begin{aligned}
    (A^n \nabla u^{n}_h, \nabla v_h) &= -(A^n \nabla g, \nabla v_h), \\
  \end{aligned}
\end{equation}
where $A^n = \lambda(s_H^{n-1}) K$. As suggested in \cite{OdWhKvLa17},
we define the non-conservative pre-flux ${\bar {\sigma}}^{n}$ be a
harmonic average of the (discontinuous) element face flux
$-{\bf n}_F \cdot A^n \nabla (u_h + g)$ at the faces. Then we use the
post-processing technique presented in the same paper (with
non-weighted minimization) to post-process ${\bar {\sigma}}^{n}$ and
obtain the conservative flux ${\sigma}^{n}$ used in the saturation
equation.

We make two observations on the information exchange between the two
equations when using this discretization:
\begin{enumerate}
\item In the pressure equation, we are only interested in the coarse
  scale saturation $s_H^{n-1}$ from the saturation equation.
\item In the saturation equation, we are only interested in the
  upscaled flux $\int_F \sigma^{n}$ from the pressure equation.
\end{enumerate}

\subsubsection{Coarse error indicators}
\label{sec:coarse_error}
The first observation allows us to compute $E_u$, $E_f$ and $E_g$ from
Section~\ref{sec:coarse_indicators}, without saving $\tilde A_T$.
Suppose that for element $T$, the lagging coefficient
${\tilde A}_T = A^m$ is from time step $m < n$, and $A = A^n$. We note
that $\delta_T$ is a coarse quantity, since fine-scale $K$ cancels,
\begin{equation}
\begin{aligned}
  \delta_T = ({\tilde A}_T - A) A^{-1/2} {\tilde A}_T^{-1/2} = \frac{\lambda(s_H^{m-1})-\lambda(s_H^{n-1})}{\sqrt{\lambda(s_H^{m-1})\lambda(s_H^{n-1})}}.
\end{aligned}
\end{equation}
Thus, to compute $\delta_T$ in \eqref{eq:eaT_coarse}, only ${\tilde
  \lambda}_{T,T'} := \lambda(s^{m-1}_H)|_{T'}$ for $T' \subset U_k(T)$
need to be saved from previous time steps. The memory required to
store ${\tilde \lambda}_{T,T'}$ behaves like $\O(k^dH^{-d})$. Also,
${\tilde \lambda}_{T,T'}$ can be used to compute $A^{-1/2}{\tilde
  A_T}^{1/2}$ in \eqref{eq:eaT_coarse}.

To summarize, no lagging fine scale information needs to be
stored. Only the coarse quantities ${\tilde \mu}_{T,T'}$ and
${\tilde \lambda}_{T,T'}$ need to be saved between iterations.

\subsubsection{Coarse face flux}
\label{sec:coarse_flux}
The second observation is that we only need the coarse element
face flux $\int_F \sigma^n$ for the saturation equation. Since this
quantity is defined on the coarse mesh, we precompute
\begin{equation}
  \begin{aligned}
    {\tilde \sigma}^n_{u,T,T',F,i} & = -\int_{F} {\bf n}_{F}\cdot \average{{\tilde A_T}} (\chi_T \nabla \phi_i - \nabla \tQ_{k,T} \phi_i)|_{T'}, \\
    {\tilde \sigma}^n_{fg,T,T',F} & = -\int_{F} {\bf n}_{F}\cdot \average{{\tilde A_T}}\nabla(-\tQ_{k,T}g + \tR_{k,T}f)|_{T'},
  \end{aligned}
\end{equation}
for all $T,T' \in \triH$, for all faces $F \subset \overline{T'}$ and
all basis functions $\phi_i$ with support in $T$. Here, we used the
harmonic average
$\average{v}|_F = 2\frac{(v|_{T_1})|_F (v|_{T_2})|_F}{(v|_{T_1})|_F +
  (v|_{T_2})|_F}$,
where $T_1$ and $T_2$ are the two elements adjacent to $F$, if $F$ is
an interior face, and $\average{v}|_F = 2(v|_{T})|_F$, where $T$ is
adjacent to $F$, if $F$ is a boundary face.  The memory required for
storing ${\tilde \sigma}^n_{u,T,T',F,i}$ and
${\tilde \sigma}^n_{fg,T,T',F}$ scales with $\O(k^d H^{-d})$, since
the two quantities are zero for all pairs $(T', T)$ except when
$T' \subset U_k(T)$.

If we now let the coarse component of the multiscale solution be expressed as
$\IH \huk = \sum_{i} \alpha_i \phi_i$, then we can compute the upscaled
non-conservative face flux by
\begin{equation*}
  {\bar \sigma}^n|_F = \frac{1}{2}\sum_{T,T',i}  \alpha_i {\tilde \sigma}^n_{u,T,T',F,i} + \frac{1}{2}\sum_{T,T'} {\tilde \sigma}^n_{fg,T,T',F}.
\end{equation*}
The final conservative face flux $\sigma^n|_F$ is then computed using
the post-process\-ing technique developed in \cite{OdWhKvLa17}.

We conclude this section by listing the upscaling algorithm
(Algorithm~\ref{alg:upscaling}) for this two-phase Darcy flow
problem. In this algorithm, the memory requirements are
$\O(k^dH^{-d}+h^{-d})$ (where $h^{-d}$ is for the coefficient $A$
which can be distributed on different computational nodes). This
allows for very refined fine meshes. Also, the coarse element loop is
still completely parallel, and this algorithm serves as a good
candidate for a scalable memory efficient upscaling algorithm.

\begin{algorithm}[H]
  \SetKwInOut{Input}{Input}\SetKwInOut{Output}{Output}
  \SetKwComment{Comment}{eq.\ }{}
  \SetKwComment{Commentsec}{sec.\ }{}
  \DontPrintSemicolon

  Pick $k$, $\TOL$ and number of time steps $N$\;
  Let ${\tilde A}_T \leftarrow \lambda(s^0_H)K$ for all $T$\;
  Compute $\tQ_{k,T}\phi_i$, $\tR_{k,T}f$ and $\tQ_{k,T}g$ for all $T$ and $i$ \Comment*{\eqref{eq:lagged_corrections}}
  Save only coarse quantities, $\tilde \lambda_{T,T'}$, ${\tilde \mu}_{T,T'}$, ${\tilde \sigma}^n_{u,T,T',F,i}$, and ${\tilde \sigma}^n_{fg,T,T',F}$ \;
  \For{$n=1,\ldots,N$}{
    Let $A \leftarrow \lambda(s^{n-1}_H)K$ be the true coefficient for this time step\;
    \For{all $T$}{
      Compute $E_{u,T}$, $E_{f,T}$ and $E_{g,T}$ (using ${\tilde \lambda}_{T,T'}$, and ${\tilde \mu}_{T,T'}$) \Comment*{\eqref{eq:eaT_coarse}}
      \If{$\max(E_{u,T}, E_{f,T}, E_{g,T}) \ge \TOL$}{
        Update lagging coefficient ${\tilde A}_T \leftarrow \lambda(s^{n-1}_H)K$ \;
        Recompute $\tQ_{k,T}\phi_i$, $\tR_{k,T}f$ and $\tQ_{k,T}g$ \;
        Save only, ${\tilde \lambda}_{T,T'}$, ${\tilde \mu}_{T,T'}$, ${\tilde \sigma}^n_{u,T,T',F,i}$, and ${\tilde \sigma}^n_{fg,T,T',F}$ \;
      }
      Update $K_{ij} \mathrel{+}= \tilde a_T(\phi_j, \phi_i)$ using $\tQ_{k,T}\phi_i$ \Comment*{\eqref{eq:final_a}}
      Update $b_{i} \mathrel{+}= \tilde L_T(\phi_i)$ using $\tR_{k,T}f$ and $\tQ_{k,T}g$ \Comment*{\eqref{eq:final_L}}
    }
    Solve for $\humsk$, by computing ${\bf K}^{-1}{\bf b}$ \Comment*{\eqref{eq:lod_falsespacecoef}}
    Compute resulting flux $\sigma^n|_F$ and solve for $s^n_H$ \Comment*{\eqref{eq:transport}}
  }
 \caption{Upscaling algorithm}
 \label{alg:upscaling}
\end{algorithm}

\subsection{Darcy flow upscaling numerical experiments}
\label{sec:darcy_experiments}
In the following two experiments, we investigate the properties of the
upscaling algorithm presented in the previous section. We pick the
following mobility functions $\lambda_w(s) = s^3$, $\lambda_n(s) =
(1-s)^3$, and $\lambda(s) = \lambda_w(s) +\lambda_n(s)$.

\subsubsection{2D random field data}
We let $\Omega = [0, 1]^2$,
$\Gamma_D = \{x \in \partial \Omega : x_1 = 0 \text{ or } x_1 = 1\}$,
$\Gamma_N = \partial \Omega \setminus \Gamma_D$, $f = 0$, $g = 1-x_1$.
We use a $512 \times 512$ rectangular grid as fine mesh for $V_h$ and a
$64 \times 64$ grid as coarse mesh for $V_H$. The permeability $K$ is
realized as a piecewise constant function on the fine mesh from a
lognormal distribution with exponential spatial correlation and
standard deviation $3$, i.e. for a fine element midpoint $x_{\rm m}$,
\begin{equation*}
  K(x_{\rm m}) = \exp(3 \kappa(x_{\rm m}))
\end{equation*}
where $\kappa(x_{\rm m}) \sim \mathcal{N}(0, 1)$ and covariance between points are
\begin{equation*}
  \operatorname{cov}(\kappa(x),\kappa(y)) = \exp\left( \frac{-\|x-y\|_2}{d} \right),
\end{equation*}
with correlation length $d=0.05$ and where $\|\cdot\|_2$ denotes
Euclidian norm. The initial saturation is set to $s^0 = 0$, and
boundary conditions are set to $s_B = 1$ on the left boundary, which
is the only boundary with ingoing flux. The number of time steps and
their size are set to $N = 2000$ and $\Delta t = N^{-1}$,
respectively.

The upscaling algorithm was run with this setup with $k=1,2,3$ and
$\TOL=0.4$, 0.2, 0.1, 0.05, 0.025, and 0.0125. A reference solution
$s^n_{H,\rm ref}$, where the pressure equation was solved on the fine
mesh using the $Q_1$ standard finite element method in every iteration
was computed. To illustrate the need for upscaling, we also computed
the pressure equation on the coarse mesh using the standard $\P_1$
finite elements. See Figure~\ref{fig:problem2_ms512} for plots over
the error in the saturaton solution at the final time step and average
fraction of recomputed correctors. In the error plot we can see that
both parameters $k$ and $\TOL$ affect the error in the chosen regimes.
We note from the recomputation plot that there is no dependency
between the fraction of recomputed element correctors and patch size
$k$. Figure~\ref{fig:problem2_solage} shows an example of the
saturation solution and the number of times correctors have been
computed.
\begin{figure}[h]
  \centering
  \includegraphics[trim=0.1cm 0.1cm 0.1cm 0.1cm, clip=true]{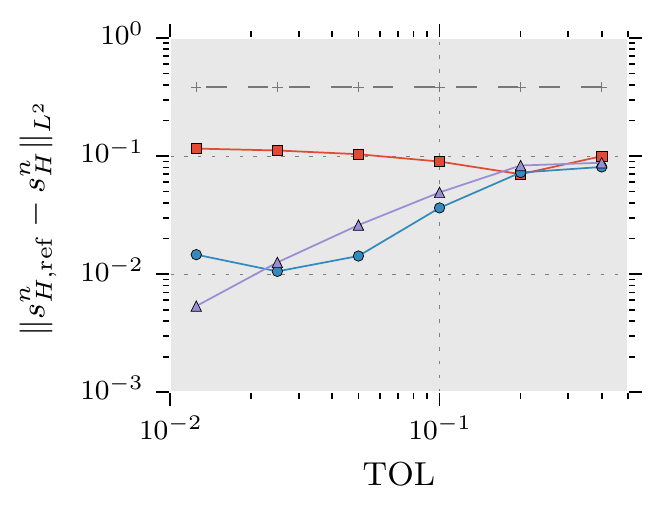}%
  \hspace{1ex}%
  \includegraphics[trim=0.1cm 0.1cm 0.1cm 0.1cm, clip=true]{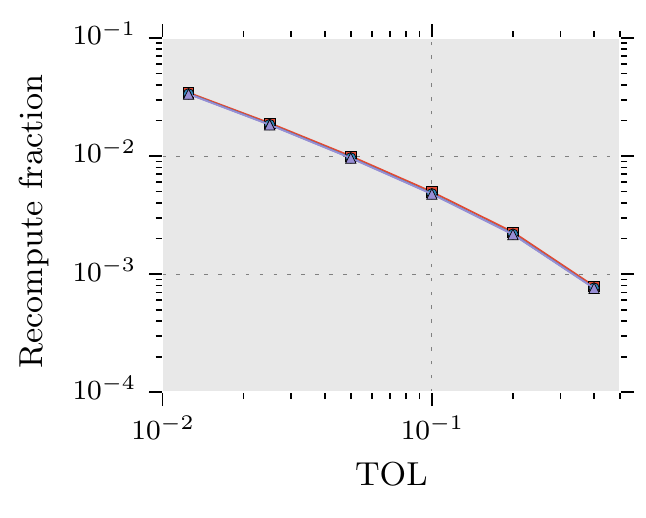}
  \caption{Error and recomputed fraction plots vs $\TOL$. The data
    sets $\circ$, {\scriptsize $\square$}, and {\scriptsize
      $\triangle$} correspond to $k = 1, 2$, and $3$,
    respectively. The dashed line ($+$) is the error obtained if using
    the standard finite element method on the coarse mesh.}
  \label{fig:problem2_ms512}
\end{figure}
\begin{figure}[h]
  \centering
  \includegraphics{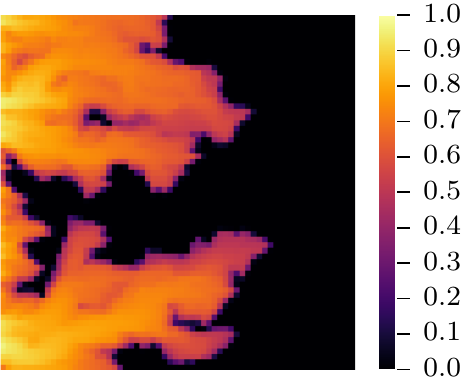}%
  \hspace{3em}%
  \includegraphics{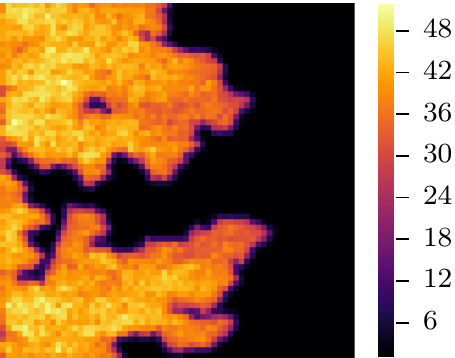}
  \caption{Time step $n = 2000$ for configuration $k=2$ and
    $\TOL=0.05$. Left: Solution $s^n_H$. Right: Recompuation count for
    each element (average is $20.3$ or 1\%).}
  \label{fig:problem2_solage}
\end{figure}

\subsubsection{3D random field data}
We let $\Omega = [0, 1]^3$,
$\Gamma_D = \{x \in \partial \Omega: x_1 = 0 \text{ or } x_1 = 1\}$,
$\Gamma_N = \partial \Omega \setminus \Gamma_D$, $f = 0$, $g = 1-x_1$.
We use a $128 \times 128 \times 128$ rectangular grid as fine mesh for
$V_h$ and a $16 \times 16 \times 16$ grid as coarse mesh for $V_H$. We
use a sample $\omega_{i,j,k,\ell}$ of independent uniformly
distributed random numbers between $0$ and $1$. The permeability $K$
is a piecewise constant function on the fine uniform mesh and is
defined by
\begin{equation*}
  K(x_{\rm m}) = \frac{1}{2^{21}}\prod_{i=1}^7 (1+\omega_{i,\lceil 2^i x_{{\rm m},1}\rceil, \lceil 2^i x_{{\rm m},2}\rceil, \lceil 2^i x_{{\rm m},3}\rceil})^3,
\end{equation*}
where $x_{\rm m} = (x_{{\rm m},1}, x_{{\rm m},2}, x_{{\rm m},3})$ are
the fine element midpoints, and $\lceil \cdot \rceil$ denotes the
ceiling function. See Figure~\ref{fig:3d_pictures} for the particular
realization used. The boundary conditions are set to $s_B = 0$ on the
boundary with ingoing flux, and the initial saturation is set to a
piecewise constant function $s^0_H$ on the coarse mesh with the
following values in the coarse element midpoints $x_{\rm m}$,
\begin{equation*}
  s^0_H(x_{\rm m}) = \begin{cases}
    1,\qquad \text{if } \|x_{\rm}-\frac{1}{2}(1,1,1)\|_2 \le \frac{1}{4}, \\
    0,\qquad \text{otherwise.}
    \end{cases}
\end{equation*}
The number of time steps are set to $N=200$ and the time step to
$\Delta t = 1$.

The upscaling algorithm was run for the three parameter combinations
I: $k=1$, $\TOL=0.1$, II: $k=2$, $\TOL=0.1$, and III: $k=1$,
$\TOL=0.01$.  Figure~\ref{fig:3d_pictures} gives an illustration of
the solution at $n=0$ and $n=200$. One of the images shows the
recomputed elements as blue boxes and we can see that many elements
are not recomputed. In this case, we were not able to compute a
reference solution using the available computational resources, but we
can estimate the sensitivity of the solutions with respect to the
parameters $k$ and $\TOL$. Let $s^{200}_{H, {\rm I}}$, $s^{200}_{H,
  {\rm II}}$, and $s^{200}_{H, {\rm III}}$ denote the saturation
solutions at time step $n=200$ for the parameter combinations I, II,
and III, respectively. We get
\begin{equation*}
  \begin{aligned}
    \text{vary }k: && \|s^{200}_{H, {\rm I}} - s^{200}_{H, {\rm II}}\|_{L^2(\Omega)} & = 0.0065, & \|s^{200}_{H, {\rm I}} - s^{200}_{H, {\rm II}}\|_{L^\infty(\Omega)} & = 0.0664, \\
    \text{vary }\TOL: && \|s^{200}_{H, {\rm I}} - s^{200}_{H, {\rm III}}\|_{L^2(\Omega)} & = 0.0019, & \|s^{200}_{H, {\rm I}} - s^{200}_{H, {\rm III}}\|_{L^\infty(\Omega)} & = 0.0242. \\
  \end{aligned}
\end{equation*}
These numbers suggest that the error due to localization (controlled
by the parameter $k$) dominates in this case.

\begin{figure}[h]
  \centering
  \begin{subfigure}{.4\textwidth}
    \centering
    \includegraphics[width=6cm,trim=0.1cm 0.1cm 0.1cm 0.1cm, clip=true]{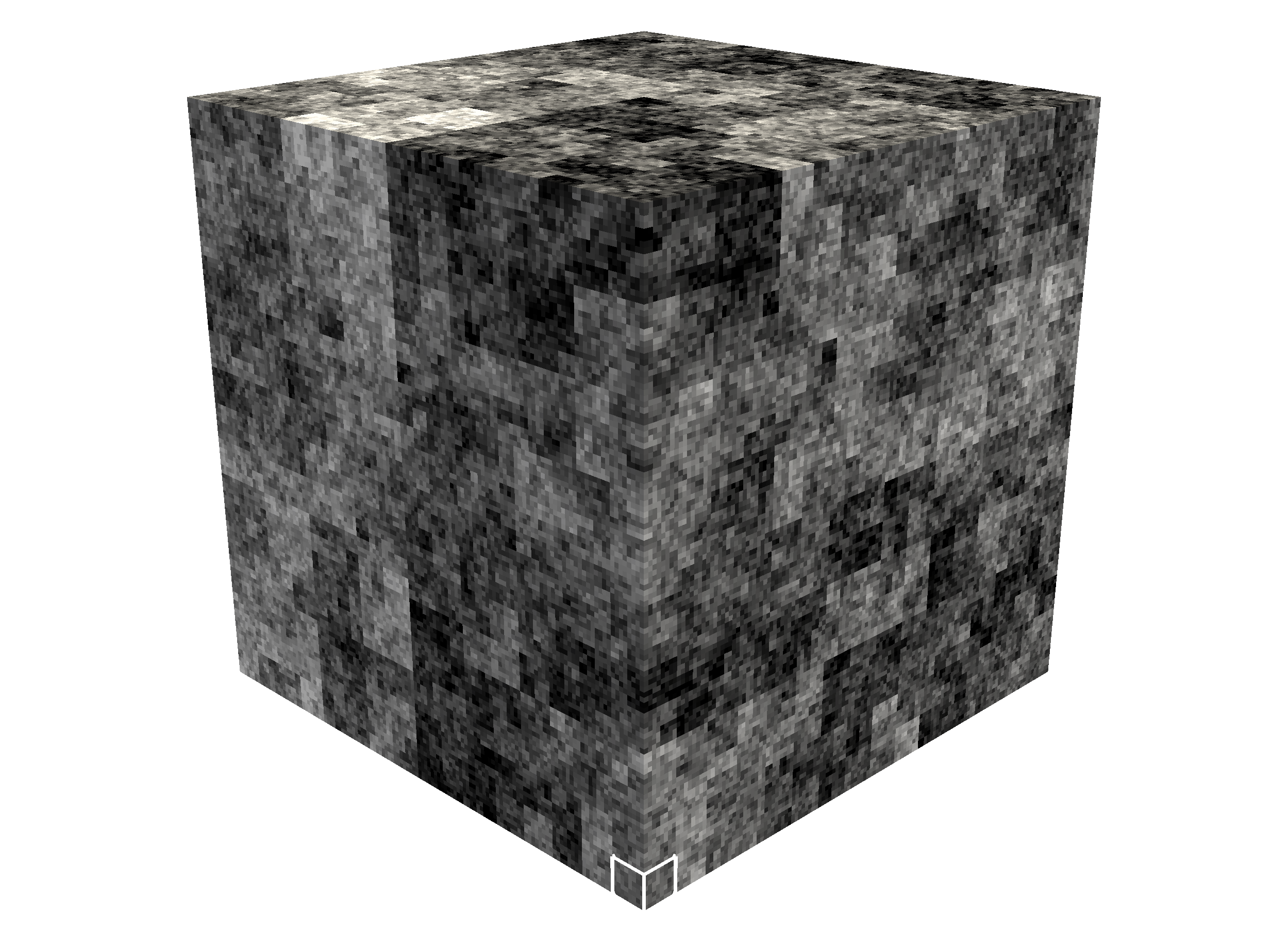}
    \caption{Coefficient on fine mesh. The coarse element size can be seen
      in the closest corner.}
  \end{subfigure}
  \hspace{1em}
  \begin{subfigure}{.4\textwidth}
    \centering
    \includegraphics[width=6cm,trim=0.1cm 0.1cm 0.1cm 0.1cm, clip=true]{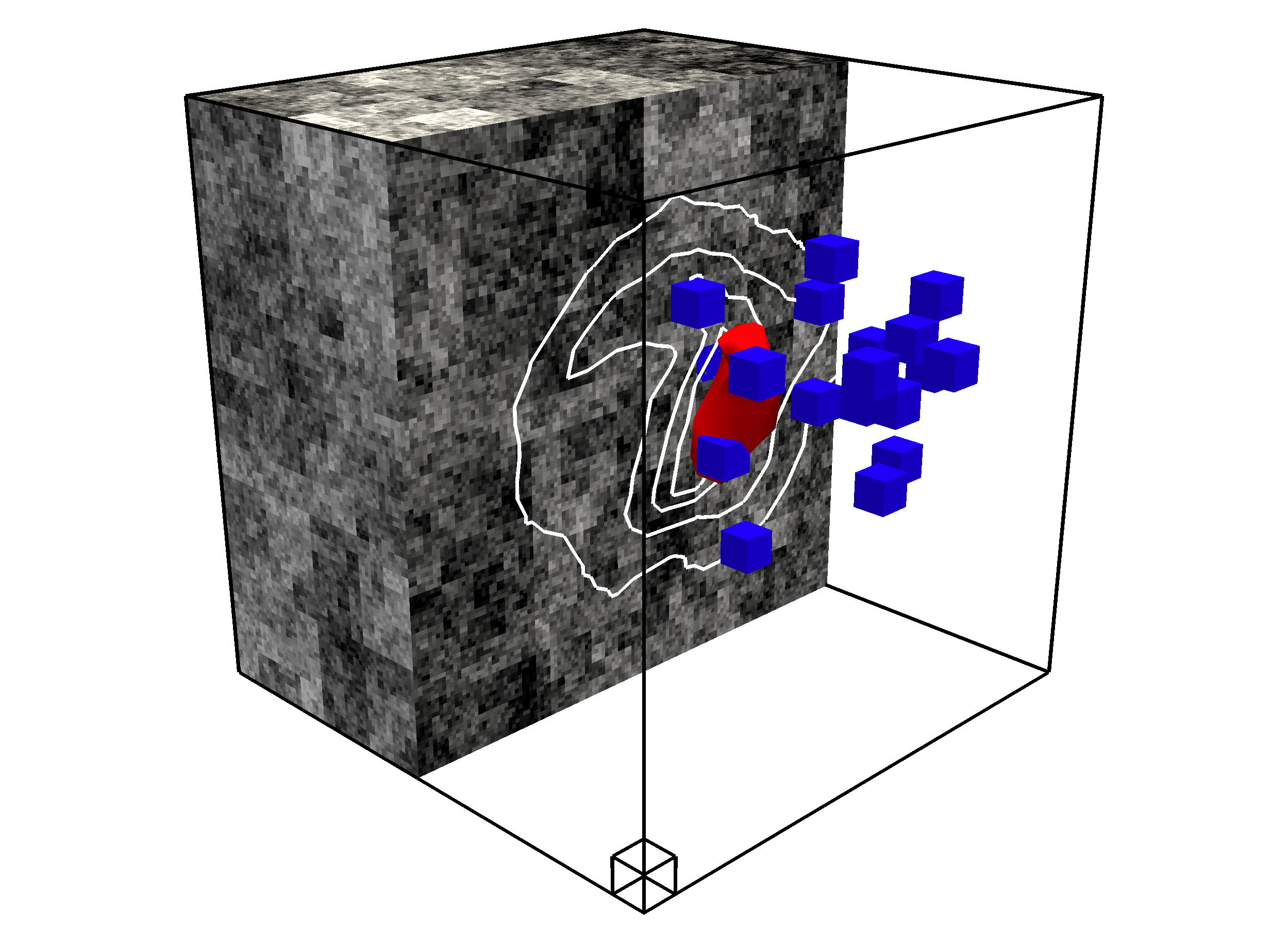}
    \caption{Time step $n = 200$. Blue elements have been recomputed this time step.}
  \end{subfigure}
  \hspace{1em}

  \begin{subfigure}{.4\textwidth}
    \centering
    \includegraphics[width=6cm,trim=0.1cm 0.1cm 0.1cm 0.1cm, clip=true]{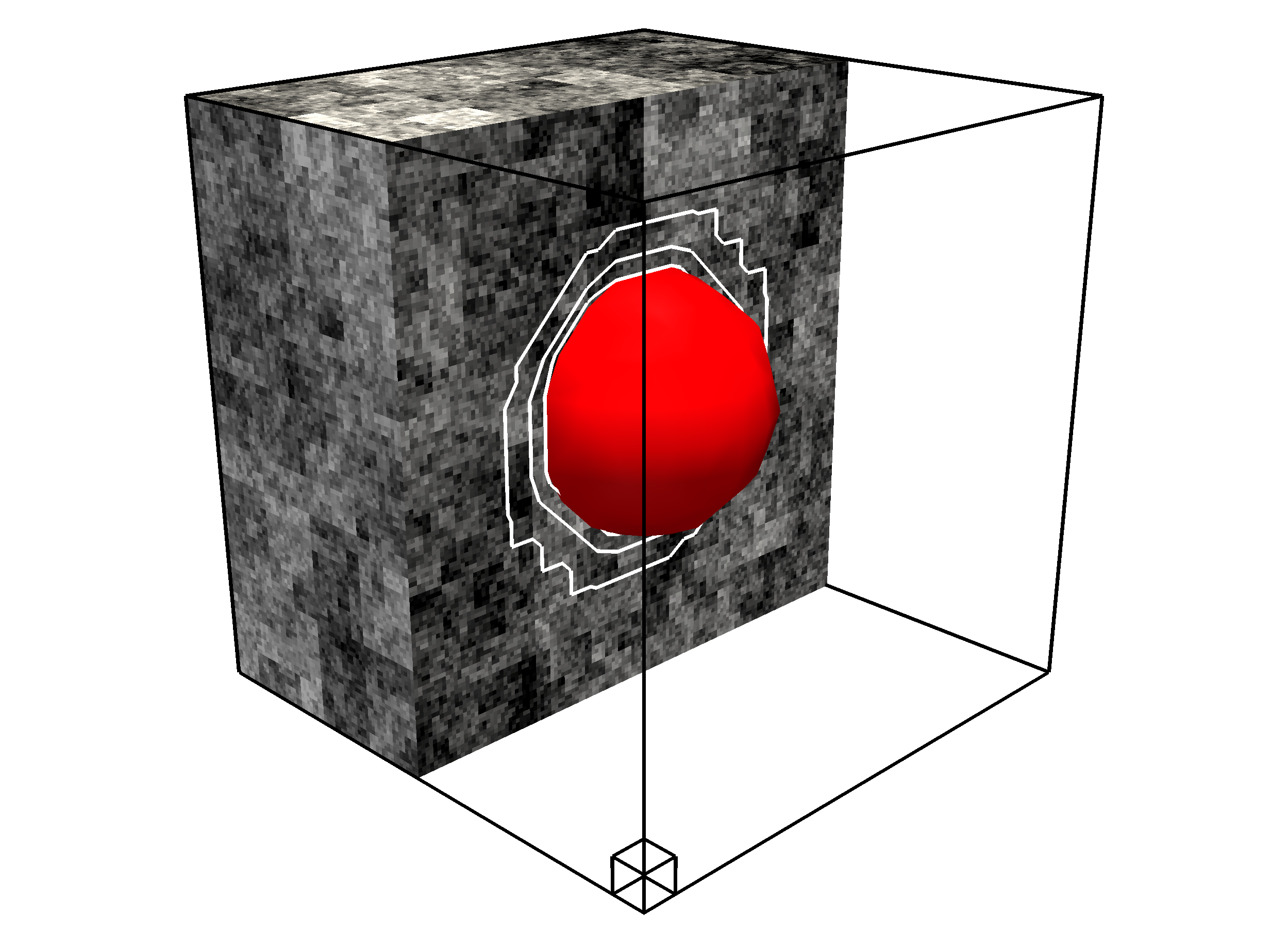}%
    \caption{A slice of the initial conditions at time step $n=0$.}
  \end{subfigure}
  \hspace{1em}
  \begin{subfigure}{.4\textwidth}
    \centering
    \includegraphics[width=6cm,trim=0.1cm 0.1cm 0.1cm 0.1cm, clip=true]{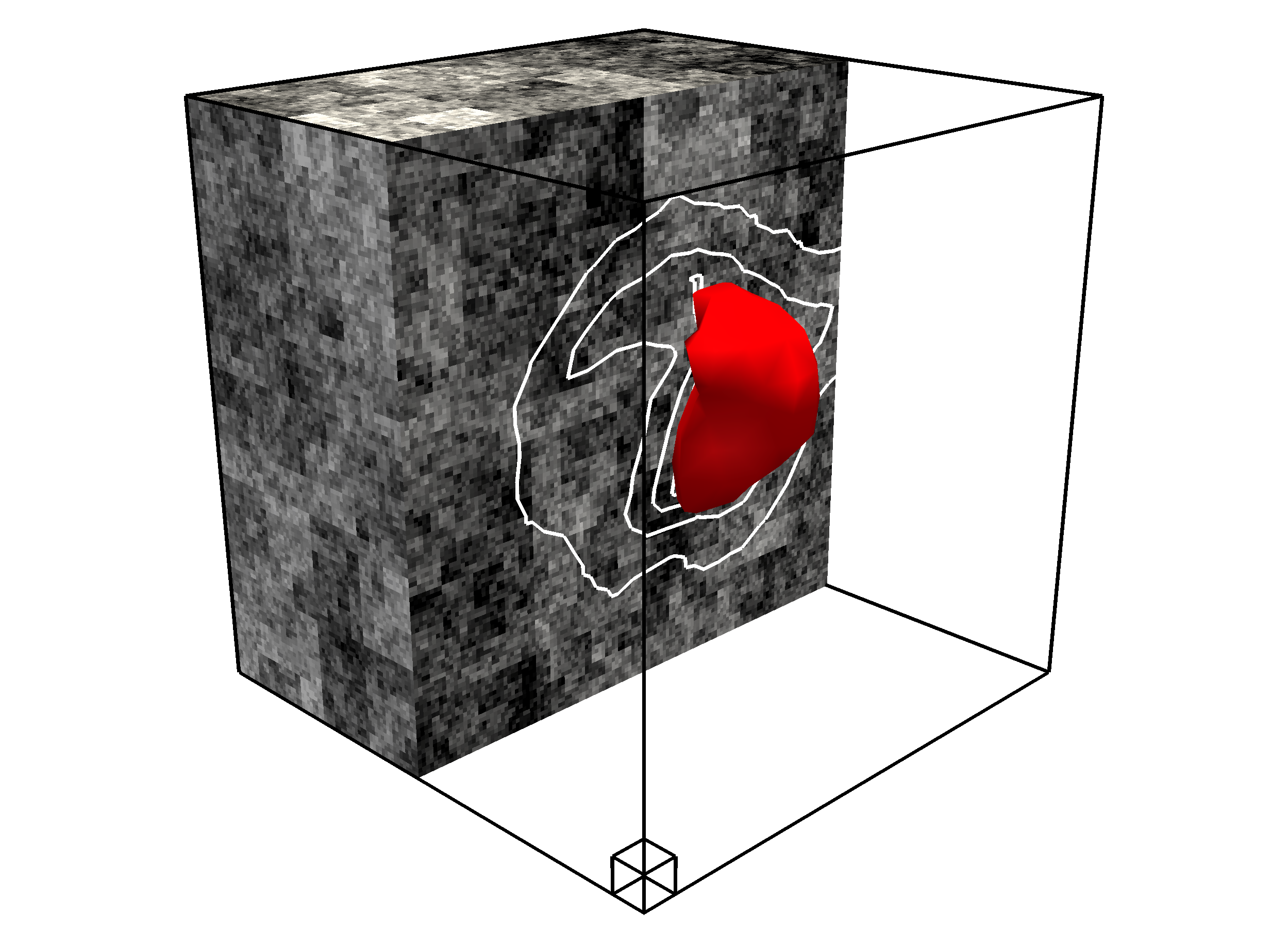}
    \caption{A slice of the solution at time step $n=200$.}
  \end{subfigure}
  \hspace{1em}
  \caption{The gray solid is the coefficient
    $1.42\times 10^{-6} \le K \le 0.46$. The red shape is the
    isosurface for $s=0.75$. The white lines are isolines for $s=0.1$,
    0.5, 0.75 and 0.9 at the slice, which is the plane $x_2=0.5$. The
    solutions and recomputed elements are shown for the case $k=1$,
    $\TOL = 0.1$}
  \label{fig:3d_pictures}
\end{figure}

\section{Conclusion}
Elliptic equations with similar rapidly varying coefficients occur for
instance in time-dependent problems for two-phase Darcy flow and in
stochastic simulations on defect composite materials. We consider a
sequence of elliptic equations, each with different coefficients
$A^n$, $n = 1,2,\ldots$. We define a method that computes and updates
an LOD multiscale space as we iterate through the coefficients. This
is done by the computation of localized element correctors that depend
on the coefficient in the vicinity of the element. These computations
can be performed completely in parallell. We derive error indicators
$e_{u,T}$, $e_{f,T}$, and $e_{g,T}$ that indicate whether or not to
update the corrector at element $T$ while iterating the sequence of
coefficients. By selecting a small enough tolerance $\TOL$ for the
error indicators, the multiscale space will keep its approximation
properties through the sequence of coefficients. It is shown
analytically and numerically that the error indicators bound the error
in energy norm of the solution. We present a memory efficient
upscaling algorithm for a particular application of two-phase Darcy
flows.

\FloatBarrier

\bibliographystyle{abbrv} \bibliography{references}

\end{document}